\numberwithin{equation}{section}    
\newtheorem{thm}{Theorem}[section]
\newtheorem{cor}{Corollary}[section]
\newtheorem{lem}{Lemma}[section]
\newtheorem{rmk}{Remark}[section]
\newtheorem{dfn}{Definition}[section]
\newtheorem{exmp}{Example}[section]
\newcommand{\R}{\mathbb{R}}
\newcommand{\dps}{\displaystyle}
\let \al=\alpha
\let \be=\beta
\let \var=\varphi
\let \vare=\varepsilon
\let \de=\delta
\let \th=\theta
\let \la=\lambda
\let \ga=\gamma
\let \q=\quad
\let \qq=\qquad
\let \med=\medskip
\let \dps =\displaystyle
\begin{document}

\vspace{0.5cm}
\begin{center}
  {\bf {\Large Attractivity of Saturated Equilibria for Lotka-Volterra Systems with Infinite Delays and Feedback Controls }}
\end{center}

\

 \centerline{\scshape Yoshiaki Muroya\footnote{Professor Yoshiaki Muroya passed away in October 2015, while the research for this paper was being conducted.
The second author wishes to dedicate this paper to his memory.} 
}
\medskip
{\footnotesize
 \centerline{Department of Mathematics, Waseda University}
 \centerline{ 3-4-1 Ohkubo, Shinjuku-ku, Tokyo 169-8555, Japan}

\

\centerline{\scshape Teresa Faria\footnote{Corresponding author.
E-mail:~teresa.faria@fc.ul.pt.}}
\medskip
{\footnotesize
 \centerline{Departamento de Matem\'atica and CMAF-CIO,}
   \centerline{ Faculdade de Ci\^encias, Universidade de Lisboa}
   \centerline{Campo Grande, 1749-016 Lisboa, Portugal}

\

\vskip0.3in


\bigskip

\begin{abstract} In this paper, we apply a Lyapunov functional approach to Lotka-Volterra systems with infinite delays and feedback controls and establish that the feedback controls  have no influence on the attractivity properties of a saturated equilibrium. This improves previous results by the authors and others, where, while feedback controls were used mostly to change the position of a unique saturated equilibrium, additional conditions involving the controls had to be assumed in order to preserve its global attractivity. The  situation of partial extinction is further analysed,  for which the original system is reduced to a lower dimensional one  which maintains  its   global dynamics features. 
\end{abstract}

\vskip 2mm

{\it Keywords}: Lotka-Volterra system, feedback control,   infinite delay, saturated equilibrium, global attractor, extinction.\\

{\it 2013 Mathematics Subject Classification}: 34K25, 34K35, 34K20, 92D25.

\section{Introduction}
\setcounter{equation}{0}
Recently, there has been a significant number of publications on the study of Lotka-Volterra population models with delays and feedback controls, see  \cite{Chen:2007,  Gopalsamy:2003,HTG, Li:2013,Muroya:2014, Muroya:2015, Nie:2009,Shi:2012,SY:16,WTJ:08,Zhang:2008} and references cited therein. In particular, 
 Li {\it et al.} \cite{Li:2013} established  results on the influence of the feedback controls on extinction and global attractivity of a two-species autonomous competitive Lotka-Volterra system with infinite delays by applying Lyapunov functional techniques. Motivated by this work and others \cite{Faria:2010,Muroya:2003}, Faria and Muroya \cite{Faria:2015} considered the following multiple species Lotka-Voltera models with infinite delays and feedback controls:
\begin{equation}\label{eq}
\left\{
\begin{array}{ll}
\displaystyle{x_i^{\prime}(t)=x_i(t)\biggl(b_i-\mu_i x_i(t)-\sum_{j=1}^n a_{ij} \int_0^{\infty}K_{ij}(s)x_j(t-s)\, ds} \\
\ \ \ \ \ \ \ \ \ \ \displaystyle{-c_i \int_0^{\infty}G_i(s)u_i(t-s)\, ds\biggr)}, \\
\displaystyle{u_i^{\prime}(t)=-e_i u_i(t)+d_i x_i(t), \quad i=1,\ldots,n}, \\
\end{array}
\right.
\end{equation}
where $ \mu_i,  c_i , d_i, e_i$ are positive constants, $b_i,a_{ij}\in\R$, and
the kernels $K_{ij}, G_i: [0,\infty) \to [0,\infty)$ are $L^1$ functions, normalized so that 
\begin{equation}\label{1.2}
\displaystyle{\int_0^{\infty} K_{ij}(s)\, ds=1, \ \int_0^\infty G_i(s)\, ds=1, \ \mbox{for} \ i,j=1,\ldots,n}.
\end{equation} 
In \eqref{eq}, $x_i(t)$ denotes the density of  an $i$th-species or class population, $u_i(t)$ is a feedback control variable, 
$b_i$ is the intrinsic  growth rate, $\mu_i$  is a self-limitation coefficient
  in the instantaneous negative feedback term,
$a_{ij}$ are the intra- (if $i=j$) and inter-specific (if $i\ne j$) cooperative/competition coefficients,
   for $i,j=1,2, \ldots,n$. 
Actually, the particular case of Lotka-Volterra systems  \eqref{eq} without controls, or with controls only for some of the variables -- i.e., the situation with  $c_i\ge 0$ for $i\in\{1,\dots,n\}$ -- can be included in most  of the  results presented here. For simplicity, we also assume the following technical condition:
 for any $i,j \in \{1,\ldots,n\}$, 
\begin{equation}\label{1.3}
\int_0^{\infty} s K_{ij}(s)\, ds < \infty\quad \mbox{and} \quad \int_0^{\infty} s G_{i}(s)\, ds < \infty.
\end{equation}
As will be shown in the proofs, 
 when using Lyapunov functional techniques, condition  \eqref{1.3} allows
the treatment of the infinite delay terms  in a way  analogous to the one usually employed to deal with the  finite delay case. 
For a more general setting which does not require  \eqref{1.3}, see Faria and Muroya \cite{Faria:2015}.

   Since some previous works by Kuang and Smith \cite{Kuang:95,KuangS:93}, the literature on  Lotka-Volterra with infinite delays has been quite vast, and it is impossible to mention here all the significant works. For some results on stability, extinction and permanence for autonomous and non-autonomous   Lotka-Volterra models with infinite delay (and no controls), see e.g. \cite{BG:97,Chen:2015,Faria:2010,Faria:2014, Faria:2016,Hou,MOca:12, MOca:15, MOca:06,TangZou:2003, TC:2001, TR:2006}. To treat the permanence of non-autonomous Lotka-Volterra systems, methods inspired in the setting proposed by Ahmad and Lazer (see e.g.~\cite{AL98, AL05}) and initiated in the classical work of Vance of Coddington \cite{VC:89}, have proven to be very fruitful. Here, non-autonomous Lotka-Volterra systems are not treated {\it per se}, but  as a secondary outcome  of the method developed, see Remark \ref{rmk3.2}.

An admissible  initial condition for  \eqref{eq} takes the form
\begin{equation}\label{initial}
\begin{split}
&x_i(\theta)=\varphi_i(\theta), u_i(\th)=\psi_i(\th), \quad \theta \in (-\infty,0], \\
& \varphi_i(0)>0,\psi_i(0)>0 \quad i=1,\ldots,n,
\end{split}
\end{equation}
where $\varphi_i,\psi_i, \ i=1,\ldots,n$, are non-negative, bounded and continuous functions on $(-\infty,0]$. 
For the initial value problems \eqref{eq}-\eqref{initial}, from the general theory for delay differential equations (DDEs) it follows that solutions are positive and defined for all $t>0$. Without loss of generality, although not relevant here, we may suppose that, for all $i$, the linear operators defined by 
$L_{ii}^K(\var):=\int_0^{\infty} K_{ii}(s)\var (-s)\, ds$ and $L_{i}^G(\var):=\int_0^{\infty} G_{i}(s)\var (-s)\, ds,$
 for $\var:(-\infty,0]\to\R$ continuous and bounded, are non-atomic at zero, i.e., $K_{ii}(0)=K_{ii}(0^+)$ and $G_{i}(0)=G_{i}(0^+)$ \cite{HaleLunel}.

   For system \eqref{eq}, we define the matrices
\begin{equation}\label{1.5}
\begin{split}
M_0&=[\delta_{ij} \mu_i+a_{ij}]_{n \times n}, \\
M&=[\delta_{ij} \la_i+a_{ij}]_{n \times n}, \quad{\rm where}\q  \lambda_i=\mu_i+(c_i d_i)/e_i, \quad i=1,\ldots,n, 
\end{split}
\end{equation}
which are called the  {\bf  community matrix} and the {\bf controlled community matrix}, respectively. Here and  throughout the paper, the standard notation $\delta_{ij}=1$ if $i=j$ and $\delta_{ij}=0$ if $i \neq j$ is used.


 In this paper, the authors pursue their work in
 \cite{Faria:2015}, where, among other results,   general sufficient conditions for the existence and global attractivity of a saturated equilibrium for model \eqref{eq} were established.  The definition of a  {\it saturated equilibrium} can be found in \cite{Hofbauer:1988} and will be recalled in Section 2. Of course, if a saturated equilibrium $E^*$ of \eqref{eq} with exactly $p$ positive components  $x_i^*>0$, corresponding to the $i$th-populations $x_i(t)$,  is a global attractor of all positive solutions, this means that  those $p$ species $x_i(t)$ stabilize with time at a constant value $x_i^*$, whereas the other $n-p$ populations are driven to extinction.

   While  the literature usually   treats the case of competitive systems only -- which amounts to having $a_{ij}\ge 0$ for $i\ne j$ in \eqref{eq} --, here, as  in  \cite{Faria:2015},  we shall not impose any  restrictions on the signs of the intra- and inter-specific  coefficients  $a_{ij}$, nor on the Malthusian rates  $b_i$.    In \cite{Faria:2015}, we assumed a form of diagonal dominance of the instantaneous negative intra-specific terms over the infinite delay effect, in both the population variables and controls.
  To be more precise, the main result  in  \cite{Faria:2015} states that if the $n \times n$ matrix $\hat{M}:=[\delta_{ij} \mu_i-|a_{ij}|-\delta_{ij}\frac{c_i e_i}{d_i}]$ is a nonsingular $M$-matrix (see \cite{Berman:1979}  and Section 2 for a definition), then there exists a  saturated equilibrium of \eqref{eq} which  is globally attractive. When the saturated   equilibrium is positive, in fading memory spaces this condition also implies its  asymptotic stability,  see  \cite[Theorem 3.8]{Faria:2015}. When the saturated equilibrium is on the boundary of $\R^{2n}_+$, sharper criteria for the extinction of part of the populations were also given in \cite{Faria:2015}.

With the present research,  better criteria for the global attractivity of a saturated equilibrium $E^*$ than the ones in   \cite{Faria:2015} are achieved. The present techniques  are quite different from the ones in \cite{Faria:2015}, where the main results were obtained by a monotone flow approach and depended on the  feedback controls. In contrast, here we construct original Lyapunov functionals to deduce our main criteria of global attractivity. Namely, we derive that the saturated equilibrium $E^*$ of \eqref{eq} is globally attractive if the matrix $\hat{M}_0:=[\delta_{ij} \mu_i-|a_{ij}|]_{n \times n}$ is a nonsingular $M$-matrix (although a better result is obtained if some components of $E^*$ are zero).
This imposition  does not depend on the control coefficients and is much less restrictive than  having $\hat{M}$ a non-singular M-matrix.
Therefore, we conclude that, whereas feedback controls can effectively  be useful to change the position of a unique saturated equilibrium $E^*$ of \eqref{eq}  -- as it was well-illustrated in \cite{Faria:2015} --, they have no influence on the attractivity properties of  $E^*$. Moreover, weaker
 sufficient conditions for partial extinction will be given: we shall  show that the global attractivity can be reduced to  the one for a reduced system, i.e., a system of smaller dimension, in the spirit in Muroya \cite[Theorem 1.2]{Muroya:2003} and  Shi {\it et al.} \cite[Theorem 3.1]{Shi:2012}.
 
Integro-differential equations with infinite delay have been considered in population dynamics since the times of Volterra, in order to account for the entire past of the species.
Dealing with DDEs with infinite delays requires  a careful choice of an {\it admissible} (in the sense of Hale and Kato, see \cite{HaleKato,HMN}) Banach phase space. The additional property of being a  {\it fading memory space} is important, in order to recover some classical results, such as the  principle of linearized stability and precompactness of bounded periodic orbits. A rigorous  theoretical framework to deal with Lotka-Volterra systems with infinite delays was provided in e.g.  \cite{Faria:2010, Faria:2016,Faria:2015}. In order to avoid repetitions,  here we shall not present a suitable phase-space for \eqref{eq}, nor an abstract formulation of the initial value problem (IVP)  \eqref{eq}-\eqref{initial}: in summary,  we say that for the IVP \eqref{eq}-\eqref{initial}
 existence, uniqueness  and continuation of solution for $t\ge 0$ is well-established, and address  the reader to the literature.

The contents of this paper are organized as follows. Section 2 is a section of preliminaries, where we recall the definition of a saturated  equilibrium, give  basic conditions for such an equilibrium  to be positive or  on the boundary of  $\R_+^{2n}$, and summarize
 some results  from \cite{Faria:2015}. In Section 3, the main result on global attractivity of the  saturated  equilibrium of \eqref{eq} is proven. In Section 4, sharper sufficient conditions
 for partial extinction are further analysed. To illustrate the theoretical results, the paper finishes with some simple examples.

\section {Preliminaries and basic results on saturated equilibria}
\setcounter{equation}{0}

Throughout the paper,  for the controlled Lotka-Volterra system \eqref{eq} the following general hypothesis is assumed:

\begin{itemize}
\item[(H0)] $ \mu_i,  d_i, e_i$ are positive constants, $c_i\ge 0, b_i,a_{ij}\in\R$, 
the kernels $K_{ij}, G_i: [0,\infty) \to [0,\infty)$ are in $L^1[0,\infty)$ and satisfy \eqref{1.2} and \eqref{1.3}.
\end{itemize}

For most cases, we are interested in the situation with effective controls, i.e., with $c_i>0$ for all $i$, but the situation without part or all of the control variables $u_i(t)$ is allowed.
In the absence of controls, the  Lotka-Volterra system reads as
%
\begin{equation}\label{eqUn} x_i^{\prime}(t)=x_i(t)\biggl(b_i-\mu_ix_i(t)-\sum_{j=1}^n a_{ij} \int_0^{\infty}K_{ij}(s)x_j(t-s)\, ds\biggr),\q i=1,\dots,n.
\end{equation}

 Clearly, the introduction of controls might change  the dynamics of \eqref{eqUn}. In \cite{Faria:2015}, the  controls were mainly used to change the position of a globally attractive equilibrium, and further requirements on the controls were imposed in order to preserve its attractivity. Here, we shall show that in fact the controls do not have any effect on the attractivity of the saturated equilibrium, therefore additional restrictions are not needed.


A point $E^*=(x_1^*,u_1^*,\dots, x_n^*,u_n^*)\in \R^{2n}$ is an equilibrium of \eqref{eq} if and only if
$$x_i^*=0\ {\rm or}\ (M x^*)_i=b_i, \q {\rm and} \q u_i^*={{d_i}\over {e_i}}x_i^*,\q i=1,\dots,n.$$
In view of the biological interpretation of the model, only non-negative solutions are meaningful, thus only solutions with initial conditions \eqref{initial} will be considered. 
The following definition of a {\it saturated equilibrium} can be found in e.g. \cite{Hofbauer:1988,Faria:2015}.

\med

\begin{dfn}\label{dfn2.1}   Let $E^*=(x_1^*,u_1^*,\dots,x_n^*, u_n^*)$ be an equilibrium of \eqref{eq}. We say that $E^*$ is a {\it saturated equilibrium} if $E^*$ is non-negative and $x^*=(x_1^*,\dots,x_n^*)$ satisfies
\begin{equation}\label{2.1}
(Mx^*)_i\ge b_i\q {whenever}\q x_i^*=0,\q i=1,\dots,n.
\end{equation}
A saturated equilibrium $E^*\ge 0$ of \eqref{eq} is said to be {\it globally attractive} if  it attracts all solutions  of the problems \eqref{eq}-\eqref{initial}, i.e.,  $X(t)\to E^*$ as $t\to\infty$ for all positive solutions $X(t)=(x_1(t),u_1(t),\dots ,x_n(t),u_n(t))$ of \eqref{eq}.
\end{dfn}

For \eqref{eqUn} and \eqref{eq}, define the  {\it community matrix} $M_0$ and the {\it controlled community matrix} $M$, respectively, as in \eqref{1.5}. Consider also  the ${n \times n}$ matrices
\begin{equation}\label{2.3}
M_0^-=[\delta_{ij} \mu_i-a_{ij}^-],\q \hat M_0=\Big[\delta_{ij} \mu_i- |a_{ij}|\Big ], 
\end{equation}
where $a_{ij}^-=0$ if $a_{ij}\ge 0$, $a_{ij}^-=-a_{ij}$ if $a_{ij}< 0$.

As for ordinary differential equation (ODE) models, the algebraic properties of $M_0$ and $M$ determine many features of  the asymptotic behaviour of solutions to \eqref{eqUn} and \eqref{eq}.  (cf.~e.g.~\cite{Yuan:17,Faria:2014, Hofbauer:1988}). 
The existence of a saturated equilibrium depends on the properties of the controlled community matrix $M$. Further properties of some special matrices will be used for the analysis of  the attractive properties of equilibria.
The concepts of P-matrix and  M-matrix \cite{Berman:1979, Hofbauer:1988}, given below,  are crucial for results and arguments used in this paper.

\begin{dfn}\label{dfn2.2}  Let $B=[b_{ij}]$ be an ${n\times n}$ matrix. We say that $B$ is   a {\bf P-matrix} if all its principal minors are positive.
For $B$  with $b_{ij}\le 0$ for $i\ne j$, $B$ is said to be an {\bf  M-matrix} (respectively a {\bf non-singular M-matrix}) if   all its principal minors are non-negative (respectively positive).
 \end{dfn}
 
For $B$ a square matrix with non-positive off-diagonal entries, it is well-known that $B$ is an M-matrix if and only if all its eigenvalues   have non-negative  real parts. The following properties of  non-singular M-matrices, as well as additional ones, can be found in   \cite{Berman:1979}.  A further property will be given in Section 3.

\begin{lem}\label{lem2.1}  Let $B=[b_{ij}]$ be an ${n\times n}$ matrix with $b_{ij}\le 0$ for $i\ne j$.
The following assertions are equivalent:  
(i) $B$ is a non-singular M-matrix; (ii) $B$ is an M-matrix and $\det B\ne 0$; (iii) all eigenvalues of $B$ have positive real parts; (iv)  there exists a positive vector $v$ such that $Bv>0$.
\end{lem}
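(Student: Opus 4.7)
The plan is to prove the cyclic chain (i)$\Rightarrow$(ii)$\Rightarrow$(iii)$\Rightarrow$(iv)$\Rightarrow$(i). The implication (i)$\Rightarrow$(ii) is immediate, since the definition of non-singular $M$-matrix forces every principal minor, in particular $\det B$, to be positive.

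For (ii)$\Rightarrow$(iii), I would write $B=sI-A$ with $s=\max_i b_{ii}$, so that $A=sI-B$ has all entries non-negative: off-diagonally $-b_{ij}\ge 0$ by hypothesis, and on the diagonal $s-b_{ii}\ge 0$ by construction (the $1\times 1$ principal minors give $b_{ii}\ge 0$). By the standard characterization of $M$-matrices in the form $B=sI-A$, $A\ge 0$, being an $M$-matrix is equivalent to $s\ge \rho(A)$; combined with $\det B\neq 0$ (which excludes $\mu=0$ from the spectrum of $B$), this forces $s>\rho(A)$. Since every eigenvalue $\mu=s-\lambda$ of $B$ satisfies $\operatorname{Re}\mu=s-\operatorname{Re}\lambda\ge s-|\lambda|\ge s-\rho(A)>0$ by Perron--Frobenius, all eigenvalues of $B$ have positive real parts.

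For (iii)$\Rightarrow$(iv), positivity of every $\operatorname{Re}\mu$ makes $B$ invertible with $\|e^{-tB}\|\to 0$ exponentially as $t\to\infty$. Writing $-B=-sI+(sI-B)$ with $sI-B$ entrywise non-negative yields $e^{-tB}=e^{-st}e^{t(sI-B)}\ge 0$ entrywise for $t\ge 0$; hence $B^{-1}=\int_0^\infty e^{-tB}\,dt$ is entrywise non-negative, and being non-singular it has no zero row. Consequently $v:=B^{-1}\mathbf{1}$, where $\mathbf{1}=(1,\dots,1)^T$, is strictly positive and satisfies $Bv=\mathbf{1}>0$.

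Finally, for (iv)$\Rightarrow$(i), assume $v>0$ and $Bv>0$. From $b_{ii}v_i=(Bv)_i-\sum_{j\neq i}b_{ij}v_j>0$ one reads off $b_{ii}>0$ for all $i$. Moreover, any principal submatrix $B'$ of $B$ inherits condition (iv) with the corresponding sub-vector $v'$, since the terms $b_{ij}v_j\le 0$ (for $j\neq i$) that are removed from $(Bv)_i$ only decrease the right-hand side; so it suffices to prove $\det B>0$ whenever (iv) holds. I would then deploy the homotopy $B_t=tB+(1-t)\operatorname{diag}(b_{11},\dots,b_{nn})$, $t\in[0,1]$, which preserves the non-positive off-diagonal structure and satisfies $B_tv>0$ throughout. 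A maximum-ratio argument---if $B_tx=0$ with $x\neq 0$, after possibly replacing $x$ by $-x$ pick $k$ with $\alpha=x_k/v_k=\max_i x_i/v_i>0$, and use $(B_t)_{kj}\le 0$ together with $x_j\le \alpha v_j$ for $j\neq k$ to obtain $(B_tx)_k\ge \alpha(B_tv)_k>0$, a contradiction---shows that each $B_t$ is non-singular, so $\det B_t$ never vanishes on $[0,1]$. Since $\det B_0=\prod_i b_{ii}>0$, continuity gives $\det B=\det B_1>0$. The delicate step is (iv)$\Rightarrow$(i), where the maximum-ratio trick combined with the homotopy to the diagonal is the key device; the other implications reduce to Perron--Frobenius theory and the standard fact that exponentials of Metzler matrices are entrywise non-negative.
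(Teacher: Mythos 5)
Your proof is correct, but it cannot be compared against a proof in the paper because the paper offers none: Lemma \ref{lem2.1} is stated as a known fact with a pointer to \cite{Berman:1979}, where these equivalences appear among the many standard characterizations of non-singular M-matrices. What you have produced is a self-contained cyclic argument (i)$\Rightarrow$(ii)$\Rightarrow$(iii)$\Rightarrow$(iv)$\Rightarrow$(i), and each step checks out: the splitting $B=sI-A$ with $s=\max_i b_{ii}$ and Perron--Frobenius handle (ii)$\Rightarrow$(iii); the entrywise non-negativity of $e^{-tB}$ for a matrix with non-positive off-diagonal part, together with $B^{-1}=\int_0^\infty e^{-tB}\,dt$, gives (iii)$\Rightarrow$(iv); and the homotopy to $\operatorname{diag}(b_{11},\dots,b_{nn})$ combined with the maximum-ratio argument is a clean elementary route to (iv)$\Rightarrow$(i), including the observation that condition (iv) is inherited by principal submatrices. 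The one place where your proof is not fully self-contained is (ii)$\Rightarrow$(iii), where you invoke the equivalence ``$B=sI-A$, $A\ge 0$, is an M-matrix iff $s\ge\rho(A)$''; that fact is itself one of the Berman--Plemmons characterizations and is of comparable depth to the lemma being proved, so strictly speaking you are still outsourcing a nontrivial ingredient to the same source the paper cites. This is acceptable here, but if you wanted a genuinely independent proof you would need to derive that equivalence (or replace it, e.g.\ by running the determinant/homotopy argument directly from the hypothesis that all principal minors are non-negative and $\det B\ne 0$).
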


We recall some results established previously by the authors in  \cite{Faria:2015}, where it was assumed that  $ \mu_i,  d_i, e_i$ are positive constants, $c_i\ge0, b_i,a_{ij}\in\R$, and
 $K_{ij}, G_i: [0,\infty) \to [0,\infty)$  satisfy \eqref{1.2}. One should once more emphasize  that, contrary to what is often assumed in the literature,  here  the coefficients $b_i,a_{ij}$ have no prescribed signs.

\begin{thm}\label{thm2.1}\cite{Faria:2015} Consider the controlled system \eqref{eq}, and define the matrices $M,M_0$ and $M_0^-$ as in \eqref{1.5} and  \eqref{2.3}.\vskip 0mm
(i) If $M_0^-$ is a non-singular M-matrix, then  all solutions of \eqref{eq} with initial conditions \eqref{initial} are defined on $[0,\infty)$ and \eqref{eq} is dissipative;  i.e., there exists a uniform upper bound $K>0$ such that all solutions of \eqref{eq}-\eqref{initial} satisfy $\limsup_{t\to \infty} x_i(t)\le K, \limsup_{t\to \infty} u_i(t)\le K$ for $1\le i\le n$. \vskip 0mm
(ii) If $M$ is a P-matrix,   there is a unique saturated equilibrium $E^*$ of \eqref{eq}.\end{thm}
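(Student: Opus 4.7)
My plan is to treat the two parts with distinct tools: a Lyapunov/maximum-principle argument for (i), and a linear-complementarity reformulation for (ii).

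For part (i), since $M_0^-$ has non-positive off-diagonal entries and is a non-singular M-matrix, Lemma \ref{lem2.1}(iv) yields a positive vector $v=(v_1,\dots,v_n)>0$ with $\eta_i:=(M_0^-v)_i>0$ for every $i$. Along any positive solution of \eqref{eq} I would introduce the weighted maximum $W(t)=\max_{1\le i\le n}x_i(t)/v_i$, and consider a sequence of running-maximum times $\tau_k\to\infty$ at which $W(\tau_k)=\sup_{0\le s\le\tau_k}W(s)$ and the maximum is attained by some coordinate $i_k$, so that $x_{i_k}'(\tau_k)\ge 0$. Substituting into \eqref{eq}, dropping the nonpositive control contribution $-c_{i_k}\int G_{i_k}u_{i_k}\le 0$, and bounding
\[
\int_0^\infty K_{i_k j}(s)\,x_j(\tau_k-s)\,ds\le v_j\,W(\tau_k)+\varepsilon_k(j),
\]
where $\varepsilon_k(j)\to 0$ arises from $\|\varphi_j\|_\infty\int_{\tau_k}^\infty K_{i_k j}\to 0$ together with $\int_0^{\tau_k}K_{i_k j}\to 1$, I would arrive at
\[
0\le b_{i_k}-\Big(\mu_{i_k}v_{i_k}-\sum_j a_{i_k j}^-v_j\Big)W(\tau_k)+o(W(\tau_k))=b_{i_k}-\eta_{i_k}W(\tau_k)+o(W(\tau_k)).
\]
Since $\eta_{i_k}\ge\min_i\eta_i>0$ and the $b_i$ are bounded, this precludes $W(\tau_k)\to\infty$, so $\limsup_{t\to\infty}W(t)\le K_0:=\max_i b_i^+/\min_i\eta_i$. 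The same estimate applied on any finite interval rules out blow-up, giving existence on $[0,\infty)$; then $\limsup x_i\le v_iK_0$, and the linear ODE $u_i'=-e_iu_i+d_ix_i$ yields $\limsup u_i\le (d_i/e_i)v_iK_0$ by a standard comparison.

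For part (ii), using $u_i^*=(d_i/e_i)x_i^*$ an equilibrium is determined by $x^*\ge 0$ alone, and Definition \ref{dfn2.1} rewrites as the linear complementarity problem
\[
x^*\ge 0,\qquad Mx^*-b\ge 0,\qquad \langle x^*,\,Mx^*-b\rangle=0.
\]
It is a classical theorem of Samelson--Thrall--Wesler that $\mathrm{LCP}(M,b)$ has a unique solution for every right-hand side $b$ precisely when $M$ is a P-matrix. The existence half is obtained by examining each support set $I\subseteq\{1,\dots,n\}$: the principal submatrix $M_I$ is again a P-matrix and hence invertible, so one solves $M_Ix_I^*=b_I$ and checks the sign conditions $x_I^*>0$ and $(Mx^*)_i\ge b_i$ for $i\notin I$; uniqueness follows because any two solutions $x^*\ne y^*$ would give a nonzero vector $z=x^*-y^*$ with $z_i(Mz)_i\le 0$ for every $i$, contradicting the P-matrix property applied to $z$.

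The main technical obstacle is in (i): the $o(\cdot)$ terms arising from the infinite-delay tails must be uniform in the (possibly varying) maximizing index $i_k$. Since there are only $n$ coordinates this is ultimately harmless, but the bookkeeping has to be set up so that the uniformity is explicit, and one must first invoke the standard local existence theory for DDEs with infinite delay on an admissible fading-memory phase space before running the Lyapunov estimate on the maximal interval of existence.
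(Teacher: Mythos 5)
The paper itself offers no proof of Theorem \ref{thm2.1}: it is imported verbatim from \cite{Faria:2015}, so there is nothing in this text to compare your argument against line by line. Judged on its own merits, your proposal is essentially correct and in fact follows the two routes that are standard for this pair of claims. For (ii), the identification of a saturated equilibrium with a solution of the linear complementarity problem $x^*\ge 0$, $Mx^*-b\ge 0$, $\langle x^*,Mx^*-b\rangle=0$, and the appeal to the Samelson--Thrall--Wesler characterization of P-matrices, is precisely the argument in Hofbauer--Sigmund \cite{Hofbauer:1988} (which the paper cites for the notion of saturated equilibrium); your uniqueness computation $z_i(Mz)_i=-x_i^*w_i'-y_i^*w_i\le 0$ is the right one, though your ``examine each support set'' sketch is not by itself an existence proof --- existence genuinely requires the STW theorem (or Lemke's algorithm/degree theory), so make clear you are citing it rather than reproving it. For (i), the weighted-maximum argument with $M_0^-v>0$ from Lemma \ref{lem2.1}(iv) is sound: dropping the nonpositive control term is legitimate because $u_i>0$ along positive solutions, and the tail error $\varepsilon_k(j)$ is bounded by $\|\varphi_j\|_\infty\int_{\tau_k}^\infty K_{i_kj}$ even if $\tau_k$ were to accumulate at a finite blow-up time, so the contradiction rules out finite-time blow-up as well. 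The one place you are too quick is the final inequality $\limsup_{t\to\infty}W(t)\le K_0$ with $K_0$ \emph{independent of the initial data}: the running-maximum argument only yields boundedness (with a constant that may depend on $\|\varphi\|_\infty$ through the last exceedance time), and the uniform asymptotic bound requires a second pass with a fluctuation-lemma argument --- choose $i$ with $\limsup_t x_i(t)/v_i=\limsup_t W(t)=:L$ and times $t_k\to\infty$ with $x_i(t_k)\to Lv_i$, $x_i'(t_k)\to 0$, which gives $0\le b_i-\eta_i L$ and hence $L\le b_i^+/\eta_i\le K_0$. This is a routine repair, and with it your proof is complete and matches the strategy of the cited source.
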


Several interesting conclusions can be drawn from the above  theorem: for instance,   a  competitive system \eqref{eq}, i.e., when $a_{ij}\ge 0$ for all $i\ne j$, is dissipative if $\mu_i-a_{ii}^->0$ for all $i$. 

\begin{thm}\label{thm2.2}\cite{Faria:2015} (i) Suppose that $M$ is a P-matrix.
The trivial solution $(0,0,\ldots,0,0)$  is the saturated equilibrium of \eqref{eq} if and only if  
$b_i \leq 0, \, i=1,\dots,n.$
Furthermore, if the $n \times n$ matrix $M_0^-=[\delta_{ij} \mu_i-a_{ij}^-]_{n \times n}$ is an M-matrix, then the trivial solution of \eqref{eq} is globally attractive.  \vskip 0cm
(ii)  If $c_i>0$ for all $i$ and the $n \times n$ matrix 
\begin{equation}\label{Mhat}
\hat{M}=\Big[\delta_{ij} \mu_i-|a_{ij}|-\delta_{ij}\frac{c_i d_i}{e_i}\Big]
\end{equation} is an M-matrix, then there exists a saturated equilibrium $E^*$, which is  a global attractor of all  positive solutions of \eqref{eq}.  If in addition $E^*$ is positive and $\hat M$ is non-singular, then 
$E^*$ is globally asymptotically stable.
\end{thm}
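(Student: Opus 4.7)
The origin is always an equilibrium of \eqref{eq}, and \eqref{2.1} at $x^*=0$ reduces to $0\ge b_i$ for all $i$, giving the first equivalence. For the global attractivity statement, I would invoke Theorem \ref{thm2.1}(i) to obtain a uniform upper bound on all positive solutions, set $\bar x_i=\limsup_{t\to\infty}x_i(t)$ and $\bar u_i=\limsup_{t\to\infty}u_i(t)$, and apply a fluctuation argument along a sequence $t_k$ where $x_i(t_k)\to\bar x_i$ and $x_i'(t_k)\to 0$. Using \eqref{1.2}, the sign hypothesis $b_i\le 0$, and $c_i u_i\ge 0$, one obtains $\mu_i\bar x_i\le\sum_j a_{ij}^-\bar x_j$, that is $M_0^-\bar x\le 0$ componentwise; since $M_0^-$ is a non-singular M-matrix, $(M_0^-)^{-1}\ge 0$ (Lemma \ref{lem2.1}) forces $\bar x=0$, and the linear equation for $u_i$ then gives $\bar u=0$.

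\textbf{Part (ii), existence.} The M-matrix hypothesis on $\hat M$ yields the diagonal dominance $\mu_i+c_id_i/e_i\ge|a_{ii}|+\sum_{j\ne i}|a_{ij}|$, which transfers to every principal submatrix of $M=[\delta_{ij}\la_i+a_{ij}]$ and shows that $M$ is a P-matrix. Theorem \ref{thm2.1}(ii) then delivers the saturated equilibrium $E^*$. For attractivity, the strategy is to build a Lyapunov functional tailored to the sign-mixed nature of $E^*$. Split $\{1,\dots,n\}=I_+\cup I_0$ according to whether $x_i^*>0$ or $x_i^*=0$, and use Lemma \ref{lem2.1}(iv) to pick positive weights $v_1,\dots,v_n$ with $\hat M^T v>0$ componentwise. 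Consider
\begin{align*}
V(t)&=\sum_{i\in I_+} v_i\Big(x_i(t)-x_i^*-x_i^*\ln\tfrac{x_i(t)}{x_i^*}\Big)+\sum_{i\in I_0} v_i\,x_i(t)\\
&\quad+\sum_i\frac{v_ic_i}{2d_i}\big(u_i(t)-u_i^*\big)^2+V_{\mathrm{delay}}(t),
\end{align*}
where $V_{\mathrm{delay}}$ collects compensating double integrals of the form $\tfrac{v_i|a_{ij}|}{2}\int_0^\infty K_{ij}(s)\int_{t-s}^t (x_j(\tau)-x_j^*)^2\,d\tau\,ds$, together with an analogous block built from $G_i$ and $u_i-u_i^*$. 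Condition \eqref{1.3} ensures these terms are finite along bounded orbits, and on differentiation the inner integral telescopes and cancels the delayed contribution.

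\textbf{Derivative estimate and conclusion.} Differentiating $V$ along \eqref{eq}, I would use the equilibrium identity to eliminate $b_i$, let the quadratic control weight $v_ic_i/(2d_i)$ absorb the cross term produced by $-c_i(u_i-u_i^*)$ via the equation $u_i'=-e_iu_i+d_ix_i$, bound each instantaneous cross-product by AM-GM, $|a_{ij}(x_i-x_i^*)(x_j-x_j^*)|\le\tfrac{|a_{ij}|}{2}[(x_i-x_i^*)^2+(x_j-x_j^*)^2]$, and combine with the telescoped $V_{\mathrm{delay}}$-contributions to reach
$$\dot V(t)\le-\sum_i\kappa_i\,(x_i(t)-x_i^*)^2-\sum_i\eta_i\,(u_i(t)-u_i^*)^2,$$
where $\kappa_i,\eta_i\ge 0$ are nonnegative combinations of the entries of $\hat M^T v$. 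A LaSalle-type invariance principle on a fading-memory phase space then forces every positive orbit into $\{\dot V=0\}$, which by uniqueness of the saturated equilibrium reduces to $\{E^*\}$. When $\hat M$ is non-singular and $E^*$ is positive, $V$ is locally positive definite and $\dot V$ strictly negative off $E^*$, upgrading attractivity to Lyapunov asymptotic stability.

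\textbf{Main obstacle.} The hard step is the algebraic bookkeeping that lines up the coefficient of $(x_i(t)-x_i^*)^2$ with the $i$-th entry of $\hat M^T v$: the weights $v_i$, the AM-GM bounds, the exact form of $V_{\mathrm{delay}}$, and the distinction between $I_+$ and $I_0$ (where the linear Volterra term $v_ix_i$ must replace the logarithmic one, since $\ln(x_i/x_i^*)$ is meaningless at $x_i^*=0$) all have to be chosen in step. A secondary technical point is invoking the LaSalle principle for an infinite-delay equation, which requires precompactness of positive orbits in the fading-memory Banach phase space alluded to in the introduction.
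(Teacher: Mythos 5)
The paper does not prove Theorem \ref{thm2.2}: it is quoted verbatim from \cite{Faria:2015} as background, so there is no in-paper argument to compare yours against. What can be said is that, according to the introduction, the original proof in \cite{Faria:2015} of part (ii) was obtained by a \emph{monotone flow} approach, whereas your proposal for part (ii) is a Lyapunov-functional argument. That route is essentially the one this paper develops for its new Theorem \ref{thm3.1} (which supersedes Theorem \ref{thm2.2}(ii)): the same $G(x,x^*)$ terms split over $I_+$ and $I_0$, the quadratic control term $\frac{c_i}{2d_i}(u_i-u_i^*)^2$, the AM--GM bounds, and the compensating double integrals. So you are, in effect, proposing to reprove the old result with the new paper's machinery; the interesting difference is that your delay compensator for the controls uses $(u_i(\tau)-u_i^*)^2$, which on differentiation produces an extra $+(u_i(t)-u_i^*)^2$ that must be dominated and is precisely what forces the control-dependent term $-\delta_{ij}c_id_i/e_i$ into $\hat M$; the paper's Theorem \ref{thm3.1} instead uses $\int_{t-s}^t(u_i(\tau)-u_i(t))^2\,d\tau$ and thereby removes the controls from the hypothesis altogether.

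There are, however, genuine gaps in the sketch as written. First, both parts of the theorem assume only that the relevant matrix ($M_0^-$, resp.\ $\hat M$) is an M-matrix, possibly singular, while your argument repeatedly uses non-singularity: Theorem \ref{thm2.1}(i) (boundedness) is stated for \emph{non-singular} M-matrices, the inverse-positivity step $(M_0^-)^{-1}\ge 0$ in part (i) requires invertibility, and in part (ii) a singular $\hat M$ only yields $\hat M^Tv\ge 0$, hence $\kappa_i,\eta_i\ge 0$ with possibly all of them zero, so that $\{\dot V=0\}$ can be large and the claim that the largest invariant subset reduces to $\{E^*\}$ is not justified. Second, the assertion that the M-matrix property of $\hat M$ gives the \emph{unweighted} diagonal dominance $\mu_i+c_id_i/e_i\ge|a_{ii}|+\sum_{j\ne i}|a_{ij}|$ is false in general (e.g.\ $\bigl[\begin{smallmatrix}1&-10\\0&1\end{smallmatrix}\bigr]$ is a non-singular M-matrix with negative first row sum); only a weighted dominance $\hat M v>0$ (or $\ge 0$) for some $v>0$ is available, and that weighted version is what must be threaded through the P-matrix deduction and the coefficient bookkeeping --- compare the proof of Theorem \ref{thm3.2}, which does exactly this via the comparison $\check M\ge\hat M_0$. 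Finally, the last claim of part (ii) is asymptotic \emph{stability} of a positive $E^*$, which a Lyapunov functional on the solution semiflow alone does not deliver without the fading-memory phase-space considerations mentioned in Remark \ref{rmk3.3}; your one-line upgrade at the end glosses over this.
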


The main goal of this paper is to improve the criterion for the global attractivity of the saturated equilibrium given  above in Theorem \ref{thm2.2}(ii).

We start by establishing more precise conditions on each  equilibrium to be saturated.
Consider the $n \times n$ matrix $A=[a_{ij}]$
 and denote
\begin{equation}\label{aii}
\hat{a}_{ii}=\lambda_i+a_{ii}, 
\end{equation}
where the coefficients $ \lambda_i=\mu_i+(c_i d_i)/e_i$ are as in \eqref{1.5}. For each $p=1,\dots,n$, set
\begin{equation}\label{R0}
R_0^p=
\left|
\begin{array}{cccc}
\hat{a}_{11}& a_{12}&\cdots&a_{1p} \\
a_{21}& \hat{a}_{22}&\cdots&a_{2p} \\
\vdots&\vdots&\ddots&\vdots \\
a_{p1}& a_{p2}&\cdots&\hat{a}_{pp} \\
\end{array}
\right|, 
\end{equation}
and, for $i=1,\ldots,p$, 
\begin{equation}\label{Ri}
R_i^p=\left|
\begin{array}{ccccccc}
\hat{a}_{11}&\cdots& a_{1,i-1}&b_1&a_{1,i+1}&\cdots&a_{1p} \\
\vdots&\ddots&\vdots&\vdots&\vdots&\ddots&\vdots \\
a_{i-1,1}&\cdots& \hat{a}_{i-1,i-1}&b_{i-1}&a_{i-1,i+1}&\cdots&a_{i-1,p} \\
a_{i,1}&\cdots&a_{i,i-1}&b_i&a_{i,i+1}&\cdots&a_{i,p} \\
a_{i+1,1}&\cdots& a_{i+1,i-1}&b_{i+1}&\hat{a}_{i+1,i+1}&\cdots&a_{i+1,p} \\
\vdots&\ddots&\vdots&\vdots&\vdots&\ddots&\vdots \\
a_{p1}&\cdots& a_{p,i-1}&b_p&a_{p,i+1}&\cdots&\hat{a}_{pp} \\
\end{array}
\right|.
\end{equation}

Observe that $R_0^p> 0,\, p=1,\dots,n,$ if $M$ is a P-matrix. If $1\le p<n$, for any fixed $q\in \{ p+1,\dots, n\}$, we also define 
\begin{equation}\label{tildeR-q-0}
R_q^{p+1,q}=\left|
\begin{array}{ccccc}
\hat{a}_{11}&a_{12}&\cdots&a_{1p}&b_1 \\
a_{21}& \hat{a}_{22}&\cdots& a_{2p} &b_2\\
\vdots&\vdots&\ddots&\vdots &\ddots\\
a_{p1}& a_{p2}&\cdots&\hat {a}_{pp}&b_p\\
a_{q1}& a_{q2}&\cdots&a_{qp}&b_q\\
\end{array}
\right| ,
\end{equation}
and remark that
\begin{equation}\label{tildeR-q}
R_q^{p+1,q}=b_q R_0^p -\sum_{j=1}^p a_{qj}R_j^p ,\q q=p+1,\dots, n. 
\end{equation}

First, we investigate the existence of a positive equilibrium of \eqref{eq}.  
A positive equilibrium $E^*=(x_1^*,u_1^*,x_2^*,u_2^*,\ldots,x_n^*,u_n^*)$ of system \eqref{eq} must satisfy
\begin{equation}
\displaystyle{b_i-\lambda_i x_i^*-\sum_{j=1}^n a_{ij} x_j^*=0, \quad -e_i u_i^*+d_i x_i^*=0, \quad i=1,\ldots,n}.
\end{equation}
By  Cramer's formulas,  $E^*$ is a positive equilibrium of system \eqref{eq} 
if and only if
$
 R_i^n>0$ for $i=0,1,\ldots,n,$ or   $R_i^n<0$ for $i=0,1,\ldots,n.$
In this case, 
\begin{equation}\label{Cramer}
x_i^*=\frac{R_i^n}{R_0^n}>0, \quad u_i^*=\frac{d_i}{e_i}x_i^*>0, \quad i=1,\ldots,n.
\end{equation}
If $M$ is a P-matrix, the saturated equilibrium of \eqref{eq} is positive if and only if $R_i^n>0$ for all $i$.

%


%
%
%
%
%

The case of a nontrivial saturated equilibrium on the boundary of the positive cone $R_+^n=[0,\infty)^n$ is  now analysed. Although there may be $2^n$ possible nonnegative equilibria of \eqref{eq}, for simplicity, we reorder the variables and 
 restrict our attention only to equilibria of the form
\begin{equation}\label{Ep}
E^{*,p}=(x_1^{*,p},u_1^{*,p},\ldots,x_p^{*,p},u_p^{*,p},0,0,\ldots,0,0),
\end{equation}
$\displaystyle{x_l^{*,p}>0}$, $ \dps {u_l^{*,p}=\frac{d_l}{e_l}x_l^{*,p}}$ for $l=1,\ldots,p, \ p\in \{0,1,\ldots,n\}$. The case $p=0$ (of the trivial equilibrium) has already been addressed in \cite{Faria:2015}, see Theorem \ref{thm2.2}(i). 


With this notation, if $p\in \{1,\ldots,n-1\}$ and $E^{*,p}$ is the saturated equilibrium of \eqref{eq}, together with the original system,  we shall also consider the following {\it reduced and rearranged} $p$-species Lotka-Volterra system with feedback controls and infinite delay:
\begin{equation}\label{reduced-eq}
\left\{
\begin{array}{ll}
\displaystyle{x_i^{\prime}(t)=x_i(t)\biggl(b_i-\mu_i x_i(t)-\sum_{j=1}^p a_{ij} \int_0^{\infty}K_{ij}(s)x_j(t-s)\, ds} \\
\ \ \ \ \ \ \ \ \ \ \displaystyle{-c_i \int_0^{\infty}G_i(s)u_i(t-s)\, ds\biggr)}, \\
\displaystyle{u_i^{\prime}(t)=-e_i u_i(t)+d_i x_i(t), \quad i=1,\ldots,p}. \\
\end{array}
\right.
\end{equation}
It is apparent that, for each $p \in \{1,\ldots,n-1\}$,  
$E^{*,p}=(x_1^{*,p},u_1^{*,p},\ldots,x_p^{*,p},u_p^{*,p},0,0,\ldots,0,0)$ with $x_i^{*,p}>0, 1\le i\le p,$ is an equilibrium of \eqref{eq} if and only if  $\tilde{E}^{*,p}:=(x_1^{*,p},u_1^{*,p},\ldots,x_p^{*,p},u_p^{*,p})$ is 
a positive equilibrium 
of its reduced and rearranged system \eqref{reduced-eq}. 
The relations between an equilibrium $E^{*,p}$ of \eqref{eq} and the positive equilibrium  $\tilde{E}^{*,p}$ of its reduced and rearranged system will be deeper exploited in Section 4.

A basic result on a saturated equilibrium is as follows.
\begin{lem}\label{lem2.3} 
Let  $M$ be a P-matrix and $p\in\{1,\dots,n-1\}$. Then, there exists a unique positive equilibrium  
$\tilde{E}^{*,p}:=(x_1^{*,p},u_1^{*,p},\ldots,x_{p}^{*,p},u_{p}^{*,p})$ of the reduced and rearranged system \eqref{reduced-eq} if and only if
 $R_i^p>0, \, i=0,1,\ldots,p.$
In this case, 
\begin{equation}\label{tildeCramer}
x_i^{*,p}=\frac{R_i^p}{R_0^p}>0, \quad u_i^{*,p}=\frac{d_i}{e_i}x_i^{*,p}>0, \quad i=1,2,\ldots,p.
\end{equation}
Moreover, $E^{*,p}=(x_1^{*,p},u_1^{*,p},\ldots,x_{p}^{*,p},u_{p}^{*,p},0,0,\dots,0,0)$   is the saturated equilibrium of \eqref{eq} if and only if
\begin{equation}\label{satu-cond-k}
\left\{
\begin{array}{ll}
R_i^p>0, \ & \mbox{for any} \ i=1,\ldots,p, \\
R_q^{p+1,q} \leq 0, \ & \mbox{for any} \ q =p+1,\dots, n. 
\end{array}
\right.
\end{equation}
In this case,
\begin{equation}\label{b-q}
b_q\le \sum_{j=1}^p a_{qj}x_j^{*,p},\q q =p+1,\dots, n.\end{equation}
%
\end{lem}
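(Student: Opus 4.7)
The plan is to reduce everything to the linear algebra encoded in the determinants $R_i^p$ and $R_q^{p+1,q}$, applying Cramer's rule together with the definition of saturated equilibrium.

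First, I would look at the reduced system \eqref{reduced-eq}. A point $\tilde{E}^{*,p}=(x_1^{*,p},u_1^{*,p},\ldots,x_p^{*,p},u_p^{*,p})$ with all $x_i^{*,p}>0$ is an equilibrium if and only if $u_i^{*,p}=(d_i/e_i)x_i^{*,p}$ and the vector $(x_1^{*,p},\ldots,x_p^{*,p})$ solves the linear system
\begin{equation*}
\hat a_{ii}x_i+\sum_{j\ne i,\,j\le p} a_{ij}x_j=b_i,\q i=1,\ldots,p,
\end{equation*}
whose coefficient matrix is precisely the $p\times p$ leading principal submatrix of $M$. Because $M$ is a P-matrix, its leading principal minor $R_0^p$ is positive, and Cramer's rule yields the unique solution $x_i^{*,p}=R_i^p/R_0^p$. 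Positivity of every $x_i^{*,p}$ is therefore equivalent to $R_i^p>0$ for $i=1,\dots,p$. This proves the first half of the lemma and formula \eqref{tildeCramer}.

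Next, I would use the equivalence noted just before the statement: $E^{*,p}$ is an equilibrium of \eqref{eq} iff $\tilde E^{*,p}$ is a positive equilibrium of \eqref{reduced-eq}. Assuming the latter, it remains to characterize when $E^{*,p}$ is \emph{saturated}. By Definition \ref{dfn2.1}, we must check condition \eqref{2.1} at every zero coordinate, i.e.~at $i=q$ for $q=p+1,\ldots,n$. Since $x_j^{*,p}=0$ for $j>p$ and $\delta_{qj}=0$ for $j\le p$, we compute
\begin{equation*}
(Mx^*)_q=\sum_{j=1}^{n}(\delta_{qj}\la_q+a_{qj})x_j^{*,p}=\sum_{j=1}^p a_{qj}x_j^{*,p},
\end{equation*}
so condition \eqref{2.1} for index $q$ is exactly $\sum_{j=1}^p a_{qj}x_j^{*,p}\ge b_q$, which is \eqref{b-q}.

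Finally, I would translate this inequality into one on $R_q^{p+1,q}$. Substituting $x_j^{*,p}=R_j^p/R_0^p$ into identity \eqref{tildeR-q} gives
\begin{equation*}
R_q^{p+1,q}=b_q R_0^p-\sum_{j=1}^p a_{qj}R_j^p= R_0^p\Bigl(b_q-\sum_{j=1}^p a_{qj}x_j^{*,p}\Bigr).
\end{equation*}
Because $R_0^p>0$, the saturation condition $b_q\le\sum_{j=1}^p a_{qj}x_j^{*,p}$ is equivalent to $R_q^{p+1,q}\le 0$. Combining this with the first part yields \eqref{satu-cond-k}, finishing the proof. There is no real obstacle here: the argument is a clean application of Cramer's rule to the reduced linear system together with the formula \eqref{tildeR-q}; the only point requiring care is to verify that the coefficient matrix of the reduced equilibrium equations is indeed the $p\times p$ leading principal submatrix of $M$, so that $R_0^p>0$ follows from $M$ being a P-matrix.
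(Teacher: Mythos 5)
Your proof is correct and follows essentially the same route as the paper's: the paper's own proof is only a two-line sketch ("the first part is apparent\dots by \eqref{tildeR-q} and the last inequalities in \eqref{satu-cond-k}, it is clear that \eqref{2.1} is satisfied"), and your argument simply fills in the Cramer's-rule computation on the leading $p\times p$ principal submatrix of $M$ and the factorization $R_q^{p+1,q}=R_0^p\bigl(b_q-\sum_{j=1}^p a_{qj}x_j^{*,p}\bigr)$ that the authors leave implicit. No gaps.
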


\begin{proof} The first part is apparent. 
The first inequalities in \eqref{satu-cond-k} imply \eqref{tildeCramer}.
 By \eqref{tildeR-q} and the last inequalities in  \eqref{satu-cond-k}, it is clear that  \eqref{2.1} is satisfied.
 \end{proof}

\begin{exmp}\label{exmp2.1} {\rm
For \eqref{eq} with $n=2$, assume that $M$ is a P-matrix, that is,
\begin{equation}\label{M0-matrix-2}
\left\{
\begin{array}{ll}
\la_i+a_{ii}>0, \q i=1,2,  \\
\det M:=(\la_1+a_{11})(\la_2+a_{22})-a_{12} a_{21}>0.
\end{array}
\right.
\end{equation}
From Theorem \ref{thm2.2} and Lemma \ref{lem2.3}, we have: \\
(i) $E^{*,0}=(0,0,0,0)$  is the unique saturated equilibrium of system \eqref{eq} if and only if  $b_i\le 0,\, i=1,2$.\\
(ii)   $E^{*,1}=(x_1^{*,1},u_1^{*,1},0,0)$, where 
\begin{equation}\label{E1,n=2}
x_1^{*,1}=\frac{b_1}{\la_1+a_{11}}>0, \quad u_1^{*,1}=\frac{d_1b_1}{e_1(\la_1+a_{11})}
\end{equation}
 is the unique saturated equilibrium of system \eqref{eq} if and only if 
\begin{equation}\label{E1,b1}
b_1>0 \quad \mbox{and} \quad b_1a_{21}  \geq b_2(\la_1+a_{11}).
\end{equation}
(iii)  there exists a unique positive equilibrium $E^{*,2}=(x_1^{*,2},u_1^{*,2},x_2^{*,2},u_2^{*,2})$ of system \eqref{eq}, where 
\begin{equation}
\left\{
\begin{array}{ll}
\displaystyle{x_1^{*,2}=\frac{b_1(\la_2+a_{22})-b_2a_{12}}{\det M}, \q x_2^{*,2}=\frac{ b_2(\la_1+a_{11})-b_1a_{21} }{\det M},} \\
\displaystyle{u_1^{*,2}=\frac{d_1 x_1^{*,2}}{e_1}, \quad u_2^{*,2}=\frac{d_2 x_2^{*,2}}{e_2}},
\end{array}
\right.
\end{equation}
if and only if 
\begin{equation}
b_1(\la_2+a_{22})>b_2a_{12} \quad \mbox{and} \quad b_2(\la_1+a_{11})>b_1a_{21}.
\end{equation}}
\end{exmp}

  \begin{exmp}\label{exmp2.2} {\rm Consider  system  \eqref{eq} with $n=2$, $b_1=b_2=-2,a_{12}=a_{21}=-4$, $a_{ii}=0$, $\mu_i=c_i=d_i=e_i=1$ for $i=1,2$, so that
  $M_0=M_0^-=\left[
\begin{array}{cc}
1&-4 \\
-4&1 \\
\end{array}
\right],\ 
M=\left[
\begin{array}{cc}
2&-4 \\
-4&2 \\
\end{array}
\right].$ In this situation, $M_0$ is not a non-singular M-matrix, neither $M$ is  a P-matrix, and both (0,0,0,0) and (1,1,1,1) are saturated equilibria. This simple example shows that the above setting should be used with caution.}
\end{exmp}


\section{Main results}
\setcounter{equation}{0}

For the sake of completeness, we include here a result about  matrices which was not found in the literature.

\begin{lem}\label{lem3.1}
Let $B=[b_{ij}]$ be an $n\times n$ matrix with $b_{ij}\le 0$ for all $i\ne j$. The two  conditions are equivalent:\vskip 0cm
(i) $B$ is a non-singular M-matrix;\vskip 0cm
(ii) there exist positive vectors $\eta=(\eta_1,\dots,\eta_n), q=(q_1,\dots,q_n)$ such that
\begin{equation}\label{014}
\sum_{j=1}^n\bigl(\eta_i b_{ij}q_j+\eta_j b_{ji}q_i\bigr)>0 \q \mbox{for} \q i=1,\ldots,n.
\end{equation}
  \end{lem}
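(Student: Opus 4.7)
The plan is to prove the equivalence by using Lemma \ref{lem2.1}, in particular the characterisations via the existence of a positive vector $v$ with $Bv>0$ (item (iv)) and via positivity of the real parts of eigenvalues (item (iii)).

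For the implication (i)$\Rightarrow$(ii), I would first note that $B^T$ is also a $Z$-matrix (non-positive off-diagonal entries) whose eigenvalues coincide with those of $B$. Hence, by Lemma \ref{lem2.1}, $B^T$ is a non-singular M-matrix as well. Applying Lemma \ref{lem2.1}(iv) to both $B$ and $B^T$, I obtain positive vectors $q=(q_1,\dots,q_n)$ and $\eta=(\eta_1,\dots,\eta_n)$ with $Bq>0$ and $B^T\eta>0$. Multiplying the $i$-th row of $Bq>0$ by $\eta_i>0$ and the $i$-th row of $B^T\eta>0$ by $q_i>0$ and summing yields \eqref{014}.

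For the implication (ii)$\Rightarrow$(i), the key idea is symmetrization. I set $D_\eta=\mathrm{diag}(\eta_1,\dots,\eta_n)$, $D_q=\mathrm{diag}(q_1,\dots,q_n)$ and $C=D_\eta B D_q$. Since $\eta_i,q_i>0$ and $b_{ij}\le 0$ for $i\ne j$, both $C$ and $C+C^T$ are $Z$-matrices. Writing out the $i$-th row sum of $C+C^T$, one sees that \eqref{014} is precisely the statement $(C+C^T)\mathbf{1}>0$, where $\mathbf{1}=(1,\dots,1)^T$. By Lemma \ref{lem2.1}(iv), $C+C^T$ is a non-singular M-matrix; being symmetric, it is therefore positive definite.

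To finish, I would show that every eigenvalue $\lambda\in\mathbb{C}$ of $B$ has positive real part, which by Lemma \ref{lem2.1}(iii) gives that $B$ is a non-singular M-matrix. Given $Bx=\lambda x$ with $x\ne 0$, set $y=D_q^{-1}x\ne 0$. Then $Cy=D_\eta B x=\lambda D_\eta D_q y$, and multiplying on the left by $y^*$ and adding the conjugate transposed identity gives
\begin{equation*}
y^*(C+C^T)y=2\,\mathrm{Re}(\lambda)\sum_{i=1}^n \eta_i q_i |y_i|^2 .
\end{equation*}
The left-hand side is strictly positive by positive definiteness of $C+C^T$, and the sum on the right is strictly positive, so $\mathrm{Re}(\lambda)>0$. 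I expect the main (minor) obstacle to be the clean justification that $(C+C^T)\mathbf{1}>0$ together with the $Z$-matrix structure of $C+C^T$ yields positive definiteness via Lemma \ref{lem2.1}; once this symmetrization step is set up, the rest is a short quadratic-form calculation.
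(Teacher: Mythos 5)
Your proof is correct, and while the implication (i)$\Rightarrow$(ii) is exactly the paper's argument (apply Lemma \ref{lem2.1}(iv) to $B$ and to $B^{T}$ and combine the two inequalities), your converse takes a genuinely different route. The paper argues by contradiction through dynamics: it writes $-B=A_{0}-cI$ with $A_{0}\ge 0$, invokes the Perron--Frobenius theorem to obtain a non-negative eigenvector $u$ associated with the spectral bound $\lambda$ of $-B$, and then, assuming $\lambda\ge 0$, uses \eqref{014} to show that the linear function $V=\sum_{i}\eta_{i}x_{i}$ is a strict Lyapunov function for the auxiliary ODE $x_{i}'=-x_{i}[r_{i}+(Bx)_{i}]$ with $r=\lambda u$, so that $0$ attracts all non-negative solutions --- contradicting the existence of the non-zero non-negative equilibrium $u$. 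Your symmetrization $C=D_{\eta}BD_{q}$ replaces all of this with linear algebra: \eqref{014} is exactly $(C+C^{T})\mathbf{1}>0$, so the symmetric $Z$-matrix $C+C^{T}$ is a non-singular M-matrix by Lemma \ref{lem2.1}(iv), hence positive definite (its eigenvalues are real and, by Lemma \ref{lem2.1}(iii), have positive real part), and the identity $y^{*}(C+C^{T})y=2\,\mathrm{Re}(\lambda)\sum_{i}\eta_{i}q_{i}|y_{i}|^{2}$ forces $\mathrm{Re}(\lambda)>0$ for every eigenvalue $\lambda$ of $B$, which is (iii) of Lemma \ref{lem2.1}. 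Your version is shorter, avoids Perron--Frobenius entirely, and stays within the matrix-theoretic toolkit already quoted; the only auxiliary fact you need --- that a symmetric non-singular M-matrix is positive definite --- is immediate from Lemma \ref{lem2.1}(iii). What the paper's dynamical proof buys instead is thematic continuity: the quadratic estimate $\dot V\le-\frac{1}{2}\sum_{i}\sum_{j}(\eta_{i}b_{ij}+\eta_{j}b_{ji})x_{i}^{2}$ appearing there is precisely the computation reused in the Lyapunov functionals of Theorems \ref{thm3.1} and \ref{thm4.1}, so the lemma's proof doubles as a rehearsal of the main argument; mathematically, however, it offers no additional generality over your approach.
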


\begin{proof}
If $B$ is a non-singular M-matrix, so is its transpose $B^T$. Thus, there are positive vectors $\eta=(\eta_1,\dots,\eta_n), q=(q_1,\dots,q_n)$ such that $Bq>0$ and $B^T\eta >0$. It follows that
$$\eta_i\Big (\sum_j b_{ij}q_j\Big )>0,\q q_i\Big (\sum_j b_{ji}\eta_j\Big )>0\q {\rm for}\q i=1,\dots,n,$$
which implies \eqref{014}.

Conversely, suppose that \eqref{014} holds for some positive vectors $\eta,q\in \R^n$. 
Since $-b_{ij}\ge 0$ for all $i\ne j$, there exists $c>0$ such that $-B=A_0-cI$, where $A_0$ is a non-negative matrix. If $A_0=0$, then $B=cI$ and (i) is satisfied. Otherwise, by the Perron-Frobenius Theorem \cite{Berman:1979,Hofbauer:1988},
the spectral radius of $A_0$, $\rho:=\sup \{ |\mu|:\mu \in \sigma (A_0)\}$, is an eigenvalue of $A_0$ with a corresponding non-negative eigenvector $u$.
Clearly $\sigma (-B)=\sigma (A_0)- c$, hence $\la=\rho -c$ is the spectral bound $s(-B)$ of $-B$, and
$-Bu=\la u$. If suffices to prove that $\la<0$. In fact, since  $\sigma(-B)=-\sigma(B)$,  if $\la<0$ it follows that   all eigenvalues of $B$ have positive real parts, which shows that $B$ is a non-singular M-matrix.

For the sake of contradiction, suppose that $\la\ge 0$, and consider the ODE  system
\begin{equation}\label{015}
x_i'=-x_i [r_i+(Bx)_i],\q i=1,\dots,n,
\end{equation}
where $r$ is the  vector defined by $r=\la u\ge 0$.
We now prove that condition \eqref{014} implies that $x=0$ is a global attractor of all non-negative solutions of \eqref{015}.

By the scaling $\bar x_i=x_i/q_i\ (1\le i\le n)$, and dropping the bars for simplicity, we may assume that (ii) holds with $q_i=\cdots =q_n=1$; i.e, for some $\eta=(\eta_1,\dots,\eta_n)>0$, it holds
\begin{equation}\label{016}
\sum_{j=1}^n\bigl(\eta_i b_{ij}+\eta_j b_{ji}\bigr)>0 \q \mbox{for} \q i=1,\ldots,n.
\end{equation}
Consider the function
$V(t)=\sum_{i=1}^n\eta_i x_i(t).$
Along non-negative solutions $x(t)\not\equiv 0$ of  \eqref{015}, $V(t)\ge 0$ and
\begin{equation*}
\begin{split}
\dot V(t)&=-\sum_i\eta_i x_i(t) \Big(r_i+\sum_jb_{ij}x_j(t)\Big)\le -\sum_i\eta_i x_i(t) \Big(\sum_jb_{ij}x_j(t)\Big)\\
&\le -\sum_i\eta_i \left ( b_{ii}x_i^2(t) +\sum_{j\ne i} b_{ij} \frac{1}{2} (x_i^2(t)+x_j^2(t))\right)\\
\end{split}
\end{equation*}
because $b_{ij}\le 0$ for $i\ne j$, hence
\begin{equation*}
\begin{split}
\dot V(t)&\le -\sum_i\bigg [ \eta_i\Big (b_{ii}+\frac{1}{2}\sum_{j\ne i}b_{ij}\Big )x_i^2(t)+\eta_i\frac{1}{2}\sum_{j\ne i}b_{ij} x_j^2(t)\bigg ]\\
&=-\sum_i\bigg [ \eta_i b_{ii}x_i^2(t)+\frac{1}{2}\sum_{j\ne i}\Big (\eta_ib_{ij}+\eta_jb_{ji}\Big )x_i^2(t)\bigg ]\\
&=-\frac{1}{2}\sum_i\sum_j \bigl(\eta_i b_{ij}+\eta_j b_{ji}\bigr)x_i^2(t)<0.
\end{split}
\end{equation*}
This proves that $x=0$ is a global attractor of all non-negative solutions of \eqref{015}. But this contradicts the fact that  $x=u$ is a (non-zero) non-negative equilibrium of \eqref{015}.
\end{proof}


\begin{rmk}\label{rmk3.1} {\rm As in the above proof, whenever it is convenient,  one may effect the changes of variables $\bar x_i(t)=\frac{x_i(t)}{q_i}$ and $\bar u_i(t)=\frac{u_i(t)}{q_i}, \ 1\le i\le n$, which transform system \eqref{eq} into
\begin{equation}\label{eq-bar}
\hspace{-0.3cm}
\left\{
\begin{array}{ll}
\displaystyle{\bar x_i^{\prime}(t)=\bar x_i(t)\biggl(b_i-\bar{\mu}_i \bar x_i(t)-\sum_{j=1}^n \bar{a}_{ij} \int_0^{\infty}K_{ij}(s)\bar x_j(t-s)\, ds} \\
\hskip 2cm\displaystyle{-\bar c_i \int_0^{\infty}G_i(s)\bar u_i(t-s)\, ds\biggr)}, \\
\displaystyle{\bar u_i^{\prime}(t)=-e_i \bar u_i(t)+d_i \bar x_i(t), \quad i=1,\ldots,n}, 
\end{array}
\right.
\end{equation}
where the new coefficients are given by $\bar{\mu}_i=\mu_i q_i$, $\bar{a}_{ij}=a_{ij} q_j,$ and $\bar{c}_i=c_i q_i,\ i,j=1,\ldots,n$.}\end{rmk}

We are now ready to prove our main results. The Lyapunov functional used below is inspired by the ones introduced in \cite{Gopalsamy:2003, Li:2013,MTW:2015}.

\begin{thm}\label{thm3.1} Consider \eqref{eq}, assume (H0) and let
 $E^*=(x_1^*,u_1^*,\ldots,x_p^*,u_p^*,0,0,\ldots,0,0)$ be a  saturated equilibrium  of \eqref{eq},
 for some $p\in\{0,1,\dots,n\}$, where   $E^*= 0$ if $p=0$.  If the matrix
\begin{equation}\label{cal M0}\hat{\cal M}_{0,p}:=[\de_{ij}\mu_i-|\tilde{a}_{ij}|],
\end{equation}
where
\begin{equation}\label{modified_a}
\tilde{a}_{ij}:=a_{ij}^-=\max \{ 0, -a_{ij}\}\ \mbox{for} \ i,j=p+1,\dots, n, \ \mbox{and } \ \tilde{a}_{ij}=a_{ij} \  \ \mbox{otherwise},
\end{equation}
is a non-singular M-matrix,
then $E^*$ is globally attractive. 
\end{thm}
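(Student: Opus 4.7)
The plan is to apply Lemma \ref{lem3.1} to the matrix $B=\hat{\mathcal{M}}_{0,p}$, which is a Z-matrix by construction, to obtain positive vectors $\eta=(\eta_1,\dots,\eta_n)$ and $q=(q_1,\dots,q_n)$ satisfying \eqref{014}. By the scaling change of variables in Remark \ref{rmk3.1}, we may replace $(x_i,u_i)$ with $(x_i/q_i,u_i/q_i)$ and assume $q=(1,\dots,1)$, so the key inequality reduces to
\begin{equation*}
2\eta_i\mu_i>\sum_{j=1}^n\bigl(\eta_i|\tilde{a}_{ij}|+\eta_j|\tilde{a}_{ji}|\bigr),\qquad i=1,\dots,n.
\end{equation*}
Next, since $|\tilde{a}_{ij}|\ge a_{ij}^-$ for all $i,j$, the Z-matrix $\hat{\mathcal{M}}_{0,p}$ is entrywise dominated by $M_0^-$ off the diagonal; combined with Lemma \ref{lem2.1}(iv), this implies $M_0^-$ is also a non-singular M-matrix, so Theorem \ref{thm2.1}(i) gives dissipativity and, in particular, uniform upper bounds on all solutions.

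The core of the proof is to construct a Lyapunov functional of the form $V(t)=V_1(t)+V_2(t)+V_3(t)+V_4(t)$, with
\begin{equation*}
V_1(t)=\sum_{i=1}^{p}\eta_i\Big[x_i(t)-x_i^*-x_i^*\ln\frac{x_i(t)}{x_i^*}\Big]+\sum_{i=p+1}^{n}\eta_i\,x_i(t),
\end{equation*}
\begin{equation*}
V_2(t)=\sum_{i=1}^{n}\frac{\eta_i c_i}{2d_i}(u_i(t)-u_i^*)^2,
\end{equation*}
and $V_3,V_4$ delay compensators of the form $\sum\frac{\eta_i|\tilde{a}_{ij}|}{2}\int_0^\infty K_{ij}(s)\int_{t-s}^t(x_j(\tau)-x_j^*)^2\,d\tau\,ds$ and the analogue with $G_i$ and $v_i(\tau):=u_i(\tau)-u_i^*$ (the latter possibly combined with a correction designed to absorb cross terms arising from the control ODE). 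Set $y_i:=x_i-x_i^*$. Differentiating $V_1$ and using the equilibrium relations $b_i=\mu_i x_i^*+\sum_j a_{ij}x_j^*+c_iu_i^*$ for $i\le p$ and the saturated-equilibrium inequality $b_i\le\sum_{j=1}^p a_{ij}x_j^*$ for $i>p$, both the ``alive'' and ``dead'' components contribute a uniform leading term $-\eta_i\mu_i y_i^2$ together with cross terms $-\eta_iy_i\sum_j a_{ij}\int K_{ij}(s)y_j(t-s)\,ds$ and $-\eta_ic_iy_i\int G_i(s)v_i(t-s)\,ds$. For the bilinear $a_{ij}$ terms, the AM-GM bound $|y_iy_j(t-s)|\le\tfrac12(y_i^2+y_j^2(t-s))$ produces exactly $|\tilde{a}_{ij}|$ coefficients, since for $i,j>p$ the quantities $y_i,y_j$ are non-negative and only the terms with $a_{ij}<0$ need to be absorbed (which is precisely the information encoded in the modification $\tilde{a}_{ij}=a_{ij}^-$ for $i,j>p$). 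The delay compensator $V_3$ then cancels the $\int K_{ij}(s)y_j^2(t-s)\,ds$ residual.

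For the control term, the specific choice $V_2=\sum\eta_ic_i/(2d_i)v_i^2$ is chosen so that, after writing $v_i(t-s)=v_i(t)-\int_{t-s}^t[-e_iv_i(\tau)+d_iy_i(\tau)]\,d\tau$, the instantaneous cross term $-\eta_ic_iy_iv_i(t)$ arising from $\dot V_1$ cancels exactly with the cross term $+\eta_ic_iv_iy_i$ coming from $\dot V_2$, while $\dot V_2$ simultaneously supplies the negative quadratic term $-\eta_ic_ie_i/d_i\cdot v_i^2$. The residual double integrals in $y_i(\tau),v_i(\tau)$ are then absorbed by $V_4$, and combining everything with the key inequality from step one yields
\begin{equation*}
\dot V(t)\le-\sum_{i=1}^n\alpha_i y_i^2(t)-\sum_{i=1}^n\beta_i v_i^2(t)
\end{equation*}
for some $\alpha_i,\beta_i>0$. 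Global attractivity of $E^*$ then follows by a standard argument: boundedness (from dissipativity) and \eqref{1.3} imply that $\dot y_i$ and $\dot v_i$ are uniformly bounded, so $y_i^2$ and $v_i^2$ are uniformly continuous; integrating $\dot V$ and invoking Barbalat's lemma yields $y_i(t),v_i(t)\to 0$, i.e., $X(t)\to E^*$.

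The main obstacle will be the precise construction of $V_4$: the delay compensator for $G_i$ must be tailored so that the positive residues produced when bounding $y_iv_i(t-s)$ (or, after substitution, $y_i\int_{t-s}^t\dot v_i(\tau)\,d\tau$) are fully cancelled without leaving extra $y_i^2$ or $v_i^2$ terms whose coefficients depend on $c_i,d_i,e_i$; this is what makes the global-attractivity criterion independent of the feedback coefficients. The separate analysis of indices $i\le p$ (true equilibrium components) and $i>p$ (extinct components, where the control term $-\eta_ic_ix_i\int G_i(s)u_i(t-s)\,ds$ is automatically $\le 0$) will allow the same Lyapunov estimate to be derived uniformly.
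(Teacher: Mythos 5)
Your proposal follows essentially the same route as the paper's proof: Lemma \ref{lem3.1} together with the scaling of Remark \ref{rmk3.1}, a Lyapunov functional combining the logarithmic terms for $i\le p$, linear terms for $i>p$, the quadratic control terms $\frac{\eta_i c_i}{2d_i}(u_i-u_i^*)^2$ and the two delay compensators, the cancellation of the instantaneous control cross term against the one produced by the control ODE, and finally integration plus Barbalat's lemma. The one step you flag as an obstacle --- absorbing the residual $(x_i(t)-x_i^*)\big(u_i(t-s)-u_i(t)\big)$ without a control-dependent leftover --- is resolved in the paper by the weighted Young inequality $|(x_i(t)-x_i^*)(u_i(t-s)-u_i(t))|\le \frac{\vare}{2}(x_i(t)-x_i^*)^2+\frac{1}{2\vare}(u_i(t-s)-u_i(t))^2$, with the $G_i$-compensator carrying the factor $\frac{c_i}{2\vare}$ and $\vare$ chosen small enough that the strict inequality \eqref{eq-eta_tilde} survives the extra $\frac{c_i\vare}{2}$ term; this is exactly why the attractivity criterion ends up independent of $c_i,d_i,e_i$.
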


\begin{proof}  
With $p=0$, we have the case $E^*=0$, addressed in Theorem \ref{thm2.2}(i). Note that in this situation $\hat{\cal M}_{0,p}=M_0^-$. So let $E^*\ne 0$ be  a saturated equilibrium.   

After reordering the variables, we may suppose that this equilibrium  $E^*$ has the form $E^*=E^{*,k}=(x_1^*,u_1^*,\ldots,x_k^*,u_k^*,0,0,\ldots,0,0)$ given by \eqref{Ep}, with $x_i^*>0$ for $i=1,\dots,k$, for some $k \in \{1,2,\ldots,p\}$. If  $\hat{\cal M}_{0,p}$ is a nonsingular M-matrix and $1\le k<p$, then $\hat{\cal M}_{0,k}$ is a nonsingular M-matrix as well, so we may suppose that $p=k$; otherwise, we replace $p$ by $k$ in the computations below. We now prove that this $E^*=E^{*,p}$ is globally attractive.

From Lemma  \ref{lem3.1},   take positive vectors  $\eta=(\eta_1,\eta_2,\ldots,\eta_n)$ and $q=(q_1,\dots,q_n)$ for which 
 \begin{equation}\label{cond-eta-q}
\displaystyle{ \eta_i \mu_iq_i>\sum_{j =1}^n \frac{1}{2}\biggl(\eta_i |\tilde{a}_{ij}|q_j+\eta_j |\tilde{a}_{ji}|q_i\biggr),\q i=1,\dots,n,}
\end{equation}
for $\tilde{a}_{ij}$ given by \eqref{modified_a}.
Effecting a scaling of the variables as in Remark \ref{rmk3.1},  
and dropping the bars  from the new variables and coefficients in \eqref{eq-bar}  for simplicity,  we assume  \eqref{cond-eta-q}  with $q_1=q_2=\ldots=q_n=1$, i.e.,
\begin{equation}\label{eq-eta_tilde}
\displaystyle{ \eta_i \mu_i>\sum_{j =1}^n \frac{1}{2}\biggl(\eta_i |\tilde{a}_{ij}|+\eta_j |\tilde{a}_{ji}|\biggr),\q i=1,\ldots,n}.
\end{equation}

 For $x>0$ and $x^*\ge 0$,
define the functions $g(x)=x-1-\ln x$ and 
$
G(x,x^*)=\left\{
\begin{array}{ll}
x^*g\bigl(\frac{x}{x^*}\bigr), \ & \mbox{if} \ x^*>0, \\ 
x, \ & \mbox{if} \ x^*=0.
\end{array}\right.
$ Thus $G(x,x_i^*)=x_i^*g\bigl(\frac{x}{x_i^*}\bigr)$ if $i=1,\ldots,p$ and $G(x,x_i^*)=x$ if $i=p+1,\ldots,n$.
Define
\begin{equation}\label{U1}
U_1(t)=\sum_{i=1}^n \eta_i \biggl(G(x_i(t),x_i^*)+\frac{c_i}{d_i}\frac{(u_i(t)-u_i^*)^2}{2}\biggr),
\end{equation}
Along positive solutions $X(t)=(x_1(t),u_1(t),x_2(t),u_2(t),\ldots,x_n(t),u_n(t))$  of  system \eqref{eq}, we have   ${\dot{U}_1(t)=\sum_{i=1}^n \eta_i \biggl\{\frac{d}{dt}G(x_i(t),x_i^*)+\frac{c_i}{d_i}(u_i(t)-u_i^*) u_i^{\prime}(t)\biggr\}}$ and
%
\begin{equation}\label{3.15}
\hspace{-0.5cm}
\displaystyle{\frac{d}{dt}G(x_i(t),x_i^*)}=
\left\{
\begin{array}{ll}
(x_i(t)-x_i^*)\frac{x_i^{\prime}(t)}{x_i(t)}, \ & \mbox{if} \ i=1,\ldots,p \\
x_i^{\prime}(t), \ & \mbox{if}\ i=p+1,\ldots,n. 
\end{array}
\right.
\end{equation}
For the case $x_i^*>0$ (i.e., for $ i=1,\ldots,p$), 
since $\displaystyle{b_i=\mu_i x_i^*+\sum_{j=1}^n a_{ij} x_j^*+c_i u_i^*}$ and $e_i u_i^*=d_i x_i^*$, we have 
\begin{equation}\label{3.16}
\begin{split}
x_i^{\prime}(t)
&=x_i(t)\biggl(-\mu_i (x_i(t)-x_i^*)-\sum_{j=1}^{p} a_{ij}  \int_0^{\infty}K_{ij}(s)(x_j(t-s)-x_j^*)\, ds \\
&-c_i \int_0^{\infty}G_{i}(s)(u_i(t-s)-u_i^*)\, ds-\sum_{j=p+1}^n a_{ij} \int_0^{\infty}K_{ij}(s)x_j(t-s)\, ds\biggr), \\
u_i^{\prime}(t)&=-e_i (u_i(t)-u_i^*)+d_i (x_i(t)-x_i^*), \quad i=1,2,\ldots,p. 
\end{split}
\end{equation}
For the case $x_i^*=0$ (i.e., for $ i=p+1,\ldots,n)$, from  \eqref{2.1} we have $\displaystyle{b_i \leq \sum_{j=1}^{p} a_{ij} x_j^*}$, and similarly 
\begin{equation}\label{3.17}
\begin{split}
x_i^{\prime}(t) &\leq x_i(t) \biggl(-\mu_i x_i(t)-\sum_{j=1}^{p} a_{ij} \int_0^{\infty}K_{ij}(s)(x_j(t-s)-x_j^*)\, ds \\
& -c_i \int_0^{\infty}G_{i}(s)u_i(t-s)\, ds-\sum_{j=p+1}^n a_{ij} \int_0^{\infty}K_{ij}(s)x_j(t-s)\, ds \biggr). 
\end{split}
\end{equation}
Summing up  \eqref{3.15},  \eqref{3.16}, \eqref{3.17}, we obtain
\begin{equation*}
\begin{split}
\dot{U}_1(t)&\leq \sum_{i=1}^{p} \eta_i \biggl\{(x_i(t)-x_i^*)\biggl(-\mu_i (x_i(t)-x_i^*)-\sum_{j=1}^{p} a_{ij} \int_0^{\infty}K_{ij}(s)(x_j(t-s)-x_j^*)\, ds \\
&-c_i \int_0^{\infty}G_{i}(s)(u_i(t-s)-u_i^*)\, ds-\sum_{j=p+1}^n a_{ij} \int_0^{\infty}K_{ij}(s)x_j(t-s)\, ds\biggr) \\
&+\frac{c_i}{d_i} (u_i(t)-u_i^*)\biggl(-e_i (u_i(t)-u_i^*)+d_i (x_i(t)-x_i^*)\biggr)\biggr\}\\
 &+\sum_{i=p+1}^n \eta_i \biggl\{x_i(t)\biggl(-\mu_i x_i(t)-\sum_{j=p+1}^n a_{ij} \int_0^{\infty}K_{ij}(s)x_j(t-s)\, ds\\
&-c_i \int_0^{\infty}G_{i}(s)u_i(t-s)\, ds-\sum_{j=1}^{p} a_{ij} \int_0^{\infty}K_{ij}(s)(x_j(t-s)-x_j^*)\, ds\biggr) \\
 &+\frac{c_i}{d_i} u_i(t)\biggl(-e_i u_i(t)+d_i x_i(t)\biggr)\biggr\},
\end{split}
\end{equation*}
thus
\begin{equation}\label{U-1}
\left.
\begin{array}{ll}
\displaystyle{\dot{U}_1(t)\le \sum_{i=1}^{p} \eta_i \biggl(-\mu_i(x_i(t)-x_i^*)^2-\frac{c_i e_i}{d_i}(u_i(t)-u_i^*)^2} \\
\qq \displaystyle{-\sum_{j=1}^{p} a_{ij} \int_0^{\infty}K_{ij}(s)(x_i(t)-x_i^*)(x_j(t-s)-x_j^*)\, ds} \\
\qq \displaystyle{-c_i \int_0^{\infty}G_{i}(s)(x_i(t)-x_i^*)(u_i(t-s)-u_i(t))\, ds\biggr)} \\
\qq \displaystyle{-\sum_{i=1}^p \eta_i \sum_{j=p+1}^n a_{ij} \int_0^{\infty}K_{ij}(s)(x_i(t)-x_i^*)x_j(t-s)\, ds} \\ 
\qq \displaystyle{+\sum_{i=p+1}^n \eta_i \biggl(-\mu_i x_i^2(t)-\frac{c_i e_i}{d_i}u_i^2(t)-\sum_{j=p+1}^n a_{ij} \int_0^{\infty}K_{ij}(s)x_i(t)x_j(t-s)\, ds} \\
\qq \displaystyle{-c_i \int_0^{\infty}G_{i}(s)x_i(t)(u_i(t-s)-u_i(t))\, ds \biggr)} \\ 
\qq\displaystyle{-\sum_{i=p+1}^n \eta_i \sum_{j=1}^p a_{ij} \int_0^{\infty}K_{ij}(s)x_i(t) (x_j(t-s)-x_j^*)\, ds}. \\
\end{array}
\right.
\end{equation}
We now use $-a_{ij}\le |a_{ij}|=|\tilde{a}_{ij}|$ for either $i\le p$ or $j\le p$, and $-a_{ij}\le a_{ij}^{-}=|\tilde{a}_{ij}| $ for $i,j>p$, and, for $ i,j=1,\dots,n,$ the estimates
\[
\begin{array}{ll}
\displaystyle{|(x_i(t)-x_i^*)(x_j(t-s)-x_j^*)| \leq \frac{1}{2} \biggl((x_i(t)-x_i^*)^2+(x_j(t-s)-x_j^*)^2\biggr)}, \\
 \displaystyle {|(x_i(t)-x_i)(u_i(t-s)-u_i(t))| \leq \frac{1}{2} \Bigl(\vare (x_i(t)-x_i^*)^2+\frac{1}{\vare}(u_i(t-s)-u_i(t))^2 \Bigr)},
\end{array}
\]
where $\vare$ is a positive constant.
We therefore derive
\begin{equation}\label{DU-1}
\hspace{-0.3cm}
\begin{split}
\dot{U}_1(t)&\leq \sum_{i=1}^n \eta_i \biggl\{\biggl(-\mu_i (x_i(t)-x_i^*)^2-\frac{c_i e_i}{d_i}(u_i(t)-u_i^*)^2\biggr)\\
&+\sum_{j=1}^n \frac{|\tilde{a}_{ij}|}{2}\biggl( (x_i(t)-x_i^*)^2+\int_0^{\infty}K_{ij}(s)(x_j(t-s)-x_j^*)^2 ds\biggr) \\
&+\frac{c_i}{2} \biggl( \vare (x_i(t)-x_i^*)^2+\frac{1}{\vare} \int_0^{\infty}G_{i}(s)(u_i(t-s)-u_i(t))^2 ds\biggr) \biggr\}.
\end{split}
\end{equation}
By our assumption  \eqref{eq-eta_tilde},
we may choose  $\vare>0$ sufficiently small such that 
\begin{equation}\label{cond-varepsilon}
\displaystyle{ \eta_i (\mu_i-|\tilde{a}_{ii}|)>\sum_{j \neq i} \frac{1}{2}\biggl(\eta_i |\tilde{a}_{ij}|+\eta_j |\tilde{a}_{ji}|\biggr)+\frac{c_i \vare}{2}, \ \mbox{for} \ i=1,2,\ldots,n}.
\end{equation}
Set 
\begin{equation}\label{U2}
\begin{split}
U_2(t)&=\sum_{i=1}^n \eta_i \biggl(\sum_{j=1}^n \frac{|\tilde{a}_{ij}|}{2} \int_0^{\infty}K_{ij}(s) \int_{t-s}^t (x_j(u)-x_j^*)^2 du ds \\
&+\frac{c_i}{2 \vare} \int_0^{\infty}G_{i}(s) \int_{t-s}^t (u_i(u)-u_i(t))^2 du ds \biggr).
\end{split}
\end{equation}
Note that, since $\hat{\cal M}_{0,p}$ is a non-singular M-matrix and $M_0^-\ge \hat{\cal M}_{0,p}$, we know that $M_0^-$ is also a non-singular M-matrix. From Theorem \ref{thm2.1}, it follows that all solutions are bounded. Boundedness of solutions together with condition \eqref{1.3} imply that  $U_2(t)$ is well defined. 

Clearly $U_2(t)\ge 0$ for $t\ge 0$. Calculating the derivative of $U_2$ along solutions, we obtain 
\begin{equation}\label{U-2}
\hspace{-0.5cm}
\begin{split}
\dot{U}_2(t)&
=\sum_{i=1}^n \eta_i \biggl\{\sum_{j=1}^n \frac{|\tilde{a}_{ij}|}{2} \biggl((x_j(t)-x_j^*)^2-\int_0^{\infty}K_{ij}(s)(x_j(t-s)-x_j^*)^2 ds\biggr) \\
&-\frac{c_i}{2 \vare}\int_0^{\infty}G_{i}(s)(u_i(t-s)-u_i(t))^2 ds
\biggr\}.
\end{split}
\end{equation}
Define $U(t)=U_1(t)+U_2(t).$
It follows from \eqref{DU-1},  \eqref{cond-varepsilon} and \eqref{U-2}  that 
\begin{equation}\label{U}
\hspace{-0.3cm}
\left.
\begin{array}{ll}
 \displaystyle{\dot{U}(t)\leq \sum_{i=1}^n \eta_i \biggl\{\biggl(-(\mu_i-|\tilde{a}_{ii}|)(x_i(t)-x_i^*)^2-\frac{c_i e_i}{d_i}(u_i(t)-u_i^*)^2\biggr)} \\
\qq \displaystyle{+\sum_{j \neq i} \frac{|\tilde{a}_{ij}|}{2} \biggl((x_i(t)-x_i^*)^2 +(x_j(t)-x_j^*)^2\biggr)+\frac{c_i}{2} \vare (x_i(t)-x_i^*)^2 \biggr\}} \\
\qq \displaystyle{= -\sum_{i=1}^n \biggl( \eta_i (\mu_i-|\tilde{a}_{ii}|)-\sum_{j \neq i} \frac{1}{2}( \eta_i |\tilde{a}_{ij}|+\eta_j |\tilde{a}_{ji}|)-\frac{c_i \vare}{2} \biggr)(x_i(t)-x_i^*)^2} \\
\qq \displaystyle{-\sum_{i=1}^n \eta_i \frac{c_i e_i}{d_i}(u_i(t)-u_i^*)^2 \leq 0},
\end{array}
\right.
\end{equation}
where the inequality is strict if $x(t)\ne x^*$.
Since all coordinates $x_i(t)$ are bounded and have bounded derivatives,  $(x_i(t)-x_i^*)^2$ are uniformly continuous on $[0,\infty)$. From \eqref{U} and \eqref{cond-varepsilon}, we may write
$$\dot U(t)\le -\sum_{i=1}^n \left[A_i (x_i(t)-x_i^*)^2+B_i(u_i(t)-u_i^*)^2\right],$$
where $A_i,B_i>0, i=1,\dots,n$. Integrating the above inequality on $[0,t]$ for  $t>0$, for any $i\in \{1,\dots,n\}$ we obtain
$$A_i\int_0^t(x_i(s)-x_i^*)^2\, ds\le U(0),\q B_i\int_0^t(u_i(s)-u_i^*)^2\, ds\le U(0),$$
 hence $(x_i(t)-x_i^*)^2,(u_i(t)-u_i^*)^2$ are integrable on $[0,\infty)$. By Barbalat's Lemma (see e.g. \cite[p.~4]{Gopal}), it follows that
$$\lim_{t\to\infty} (x_i(t)-x_i^*)^2=0,\ \lim_{t\to\infty} (u_i(t)-u_i^*)^2=0.$$
The proof is complete. 
\end{proof}  

 Sufficient conditions for the existence of a positive equilibrium and its global attractivity are as follows:
\begin{thm}\label{thm3.2} Assume (H0), suppose that
the matrix $ \hat{M}_0=[\de_{ij}\mu_i-|a_{ij}|]$  is a non-singular M-matrix and that $R_i^n>0, i=1,\dots,n$, where $R_i^n$ are given by \eqref{Ri}.
 Then, there exists a unique positive equilibrium $E^*=(x_1^*,u_1^*,\ldots,x_n^*,u_n^*)$ of \eqref{eq}, which is globally attractive. 
\end{thm}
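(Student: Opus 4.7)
The plan is to reduce to Theorem \ref{thm3.1} with $p=n$. Before we can do this we must first produce the positive equilibrium $E^*$, and this is where the hypothesis on $\hat{M}_0$ is needed: the condition $R_i^n > 0$ for $i=1,\dots,n$ alone is not enough for Cramer's formula \eqref{Cramer} to deliver positive components; we also need $R_0^n = \det M > 0$.

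I would first apply Lemma \ref{lem2.1}(iv) to obtain a positive vector $v = (v_1,\dots,v_n)$ with $\hat{M}_0 v > 0$, that is,
\[
\mu_i v_i > \sum_{j=1}^n |a_{ij}|\, v_j, \qquad i = 1,\dots,n.
\]
In particular $\mu_i > |a_{ii}|$, so $\mu_i + a_{ii} > 0$, and since $\lambda_i \ge \mu_i$ one deduces
\[
(\lambda_i + a_{ii}) v_i \;\ge\; (\mu_i - |a_{ii}|) v_i \;>\; \sum_{j \ne i} |a_{ij}|\, v_j, \qquad i = 1,\dots,n.
\]
This is weighted strict diagonal dominance for the controlled community matrix $M$, with positive diagonal entries, and it is inherited by every principal submatrix $M_S$ (the right-hand side only shrinks under restriction to $S$). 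Applying Gershgorin's theorem to $D^{-1} M_S D$ with $D = \mathrm{diag}(v_1,\dots,v_n)$ shows that every eigenvalue of every principal submatrix of $M$ has strictly positive real part, so every principal minor of $M$ is positive. Hence $M$ is a P-matrix and, in particular, $R_0^n = \det M > 0$.

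Combining this with the hypothesis $R_i^n > 0$ for $i=1,\dots,n$, Cramer's rule \eqref{Cramer} yields a unique positive equilibrium $E^*$ with $x_i^* = R_i^n / R_0^n > 0$ and $u_i^* = d_i x_i^* / e_i > 0$. Since $E^*$ has no vanishing components, the saturation condition \eqref{2.1} is vacuous, so $E^*$ is automatically saturated. I then apply Theorem \ref{thm3.1} with $p = n$: the definition \eqref{modified_a} degenerates to $\tilde{a}_{ij} = a_{ij}$ for every $i,j$, because the index set $\{p+1,\dots,n\}$ is empty when $p = n$, and therefore $\hat{\mathcal{M}}_{0,n}$ coincides with $\hat{M}_0$, which is non-singular M by assumption. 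Theorem \ref{thm3.1} then delivers the global attractivity of $E^*$.

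The main obstacle, though mild, is the intermediate step that $\hat{M}_0$ being a non-singular M-matrix forces $M$ to be a P-matrix: one has to lift the diagonal-dominance witness supplied by Lemma \ref{lem2.1}(iv) for the comparison matrix $\hat{M}_0$ to the sign-indefinite matrix $M$ and convert weighted strict diagonal dominance into positivity of all principal minors of $M$. Once $R_0^n > 0$ is in hand, the existence and uniqueness of $E^*$ follow at once from Cramer, and global attractivity is an immediate application of Theorem \ref{thm3.1}.
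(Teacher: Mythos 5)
Your proposal is correct, and its skeleton is the same as the paper's: establish that $M$ is a P-matrix, obtain the unique positive saturated equilibrium, observe that $\hat{\mathcal M}_{0,n}=\hat M_0$, and invoke Theorem \ref{thm3.1} with $p=n$. The only point of divergence is how the implication ``$\hat M_0$ a non-singular M-matrix $\Rightarrow$ $M$ a P-matrix'' is proved. The paper passes through the comparison matrix $\check M=[\lambda_i\delta_{ij}-|a_{ij}|]\ge \hat M_0$, notes that $\check M v>0$ for some $v>0$, and then cites the Hofbauer--Sigmund theory (``$-M$ is LV-stable, hence $M$ is a P-matrix''). You instead make this step self-contained: the same witness $v$ gives weighted strict row diagonal dominance with positive diagonal for $M$, this dominance is inherited by every principal submatrix, and Gershgorin applied to $D^{-1}M_SD$ forces all eigenvalues of each $M_S$ into the open right half-plane, whence every principal minor is positive (real eigenvalues are positive and complex ones pair off with positive product). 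This is a correct and more elementary route for that step; what the paper's citation buys is brevity, while your version buys independence from the LV-stability machinery. You are also slightly more careful than the paper in spelling out that $R_0^n=\det M>0$ is needed alongside $R_i^n>0$ for Cramer's rule to yield positivity, a point the paper leaves implicit via its earlier remark that, for $M$ a P-matrix, the saturated equilibrium is positive iff $R_i^n>0$ for all $i$.
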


\begin{proof}
Recall the matrices $M_0,M$ and $\hat{M}_0$ defined in \eqref{1.5} and \eqref{2.3}. It is enough to prove that in this situation $M$ is a P-matrix. In fact, if $M$ is a P-matrix, Theorem \ref{thm2.1} implies that there exists a unique saturated equilibrium $E^*$  of \eqref{eq}, in which  case, as $\hat{\cal M}_{0,n}= \hat{M}_0$, the result  is a consequence of Theorem \ref{thm3.1}.

Note that $\check{M}:=[\la_i\de_{ij}-|a_{ij}|]=[(\mu_i+c_i\frac{d_i}{e_i})\de_{ij}-|a_{ij}|]\ge \hat M_0$. If $\hat{M}_0$  is a non-singular M-matrix, then   $\check{M}$ is a non-singular M-matrix as well, thus there exists a positive vector $v$ such that  $\check{M}v>0$. In particular, it follows that
$-M$ is LV-stable, hence  $M$ is a P-matrix. See \cite[pp.~201-202]{Hofbauer:1988}  and \cite{Faria:2015} for definitions and details. 
    \end{proof}

\begin{rmk}\label{rmk3.2} {\rm It is opportune to mention that the above proof of Theorem \ref{thm3.1} is applicable to non-negative solutions of differential inequalities
\begin{equation}\label{eqIneq}
\left\{
\begin{array}{ll}
\displaystyle{x_i^{\prime}(t)\le x_i(t)\biggl(b_i-\mu_i x_i(t)-\sum_{j=1}^n a_{ij} \int_0^{\infty}K_{ij}(s)x_j(t-s)\, ds} \\
\ \ \ \ \ \ \ \ \ \ \displaystyle{-c_i \int_0^{\infty}G_i(s)u_i(t-s)\, ds\biggr)}, \\
\displaystyle{u_i^{\prime}(t)\le -e_i u_i(t)+d_i x_i(t), \quad i=1,\ldots,n}, \\
\end{array}
\right.
\end{equation}
for coefficients and kernels satisfying (H0), and $E^*$ still a saturated equilibrium of \eqref{eq}. This observation also permits us to apply the above result to {\it  non-autonomous} Lotka-Volterra models with infinite delays and feedback controls  of the form
\begin{equation}\label{eqNonA}
\left\{
\begin{array}{ll}
\displaystyle{x_i^{\prime}(t)=x_i(t)\biggl(\be_i(t)- m_i (t)x_i(t)-\sum_{j=1}^n \al_{ij}(t) \int_0^{\infty}K_{ij}(s)x_j(t-s)\, ds} \\
\ \ \ \ \ \ \ \ \ \ \displaystyle{-k_i (t)\int_0^{\infty}G_i(s)u_i(t-s)\, ds\biggr)}, \\
\displaystyle{u_i^{\prime}(t)= -\epsilon_i (t)u_i(t)+\de_i (t)x_i(t), \quad i=1,\ldots,n}, \\
\end{array}
\right.
\end{equation}
where the non-negative kernels $K_{ij}(t),G_i(t)$ satisfy \eqref{1.2}, \eqref{1.3},   $\be_i(t), m_i(t),\al_{ij}(t), k_i(t), \epsilon_i (t),\de_i (t)$ are continuous  and bounded in $[0,\infty)$, with $m_i(t),  \epsilon_i (t)$ bounded below by  positive constants and $k_i(t),\de_i (t)\ge 0$. In fact, non-negative solutions of the non-autonomous system \eqref{eqNonA} satisfy \eqref{eqIneq}, with 
\begin{equation*}
\begin{array}{ll}
&b_i=\sup_{t\ge 0} \be_i(t),\ d_i=\sup_{t\ge 0} \de_i(t),\\
& \mu_i=\inf_{t\ge 0} m_i(t),\ a_{ij}=\inf_{t\ge 0}\al_{ij}(t),\ c_i=\inf_{t\ge 0}k_i(t),\ e_i=\inf_{t\ge 0} \epsilon_i (t),\q i,j=1,\dots,n.
\end{array}
\end{equation*}}
\end{rmk}

\begin{rmk}\label{rmk3.3} {\rm
 As already noticed by the authors in \cite{Faria:2015}, in fading memory spaces  (see \cite{HMN} for a definition)  the saturated equilibrium $E^*$ of \eqref{eq} is asymptotically stable if it is globally attractive and positive, however $E^*$ is not necessarily asymptotically stable if it lies on the boundary of the positive cone (although its linearization
  is stable).}\end{rmk}

  Theorem \ref{thm3.1} states  the global attractivity of the unique saturated equilibrium of \eqref{eq}, if  $\hat{\cal M}_{0,p}$ defined by \eqref{cal M0} is a non-singular M-matrix (or equivalently, if
 \eqref{cond-eta-q} holds, for some positive vectors $\eta,q$).
 This sufficient condition {\it does not depend on the feedback controls}, and strongly improves results in the literature \cite{Faria:2015, Gopalsamy:2003,Li:2013, Shi:2012}. E.g.,
 applying Theorem \ref{thm3.1} to a planar Lotka-Volterra system, we obtain the following corollary (compare with \cite[Theorems 1.1 and 1.2]{Li:2013} and  \cite[Proposition 5.3]{Faria:2015}):

\begin{cor}\label{cor3.1} For \eqref{eq} with $n=2$, assume \eqref{M0-matrix-2} and as before set
$a_{ii}^-=\max \{ 0,-a_{ii}\}, i=1,2$. With the notation in Example \ref{exmp2.1},  \vskip 0cm
i) if $E^{*,1}=(x_1^{*,1},u_1^{*,1},0,0)$ is the  unique saturated equilibrium and 
$\hat{\cal M}_{0,1}=\left[
\begin{array}{cc}
\mu_1-|{a}_{11}|&-|a_{12}| \\
-|a_{21}|&\mu_2-a_{22}^-\\
\end{array}
\right]$ is a non-singular M-matrix, i.e.,
\begin{equation}\label{hatM0-matrix-2-1}
\mu_1-|a_{11}|>0, \ \mu_2-a_{22}^->0\q and \q (\mu_1-|a_{11}|)(\mu_2-a_{22}^-)>|a_{12} a_{21}|,
\end{equation}
then $E^{*,1}$ is globally attractive; \vskip 0cm
ii) if there exists a positive equilibrium $E^{*,2}$ and
the $2 \times 2$ matrix $\hat{M}_0=[\delta_{ij} \mu_i-|a_{ij}|]$ is a nonsingular M-matrix, that is, 
\begin{equation}\label{hatM0-matrix-2-3}
\mu_i-|a_{ii}|>0, \ i=1,2, \quad \mbox{and} \quad (\mu_1-|a_{11}|)(\mu_2-|a_{22}|)>|a_{12}a_{21}|,
\end{equation}
then $E^{*,2}$  is globally attractive. 
\end{cor}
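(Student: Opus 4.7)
The plan is to deduce both parts of the corollary directly from Theorem \ref{thm3.1}, after reading off the specialization of the general matrix $\hat{\cal M}_{0,p}$ defined in \eqref{cal M0} to the case $n=2$ with the appropriate value of $p$. In dimension two, a matrix with non-positive off-diagonal entries is a non-singular M-matrix if and only if both diagonal entries and the determinant are positive, so the hypotheses \eqref{hatM0-matrix-2-1} and \eqref{hatM0-matrix-2-3} are direct rewritings of the M-matrix assumption of Theorem \ref{thm3.1}.

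For part (i), I will apply Theorem \ref{thm3.1} with $p=1$. According to \eqref{modified_a}, the only index pair for which $\tilde a_{ij}$ differs from $a_{ij}$ is $(i,j)=(2,2)$, where $\tilde a_{22}=a_{22}^-$. Consequently $\hat{\cal M}_{0,1}$ reduces exactly to the $2\times 2$ matrix written in the statement, and the three inequalities in \eqref{hatM0-matrix-2-1} are precisely the conditions that its two $1\times 1$ and its $2\times 2$ principal minors be positive. Since $E^{*,1}$ is assumed to be the unique saturated equilibrium of \eqref{eq}, Theorem \ref{thm3.1} directly yields global attractivity.

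For part (ii), I will apply Theorem \ref{thm3.1} with $p=n=2$. In this case the set $\{p+1,\ldots,n\}$ is empty, so \eqref{modified_a} forces $\tilde a_{ij}=a_{ij}$ for all $i,j$; thus $\hat{\cal M}_{0,2}=\hat M_0$, and \eqref{hatM0-matrix-2-3} is exactly the non-singular M-matrix condition on $\hat M_0$. Any positive equilibrium trivially satisfies \eqref{2.1} since the premise $x_i^*=0$ is never fulfilled, so $E^{*,2}$ is a saturated equilibrium of \eqref{eq}, and Theorem \ref{thm3.1} applies. Alternatively, one may invoke Theorem \ref{thm3.2}, noting that the existence of $E^{*,2}$ as a positive equilibrium forces $R_i^n>0$ for $i=0,1,2$ via Cramer's formulas \eqref{Cramer}.

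I anticipate no serious obstacle: the corollary amounts to a dictionary between the compact matrix condition of Theorem \ref{thm3.1} and its explicit manifestation in dimension two. The only point deserving a brief remark is the verification in part (ii) that a positive equilibrium automatically qualifies as saturated, which, as observed above, holds vacuously.
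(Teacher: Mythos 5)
Your proposal is correct and follows exactly the route the paper intends: Corollary \ref{cor3.1} is presented as an immediate specialization of Theorem \ref{thm3.1} to $n=2$ with $p=1$ and $p=2$, and your identification of $\hat{\cal M}_{0,1}$ and $\hat{\cal M}_{0,2}=\hat M_0$ via \eqref{modified_a}, together with the two-dimensional characterization of non-singular M-matrices by positive principal minors, is precisely the required dictionary. The brief verification that a positive equilibrium is vacuously saturated is a sensible touch and consistent with the paper's framework.
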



\section{Sharper results on partial extinction}
\setcounter{equation}{0}

 If the saturated
 equilibrium is on the boundary of the non-negative cone $\R^{2n}_+$, Theorem \ref{thm3.1} establishes sufficient conditions for some of the populations $x_j(t)$ to be driven to extinction. 
For other results on partial extinction of Lotka-Volterra systems with infinite delay (and with or without controls), see e.g. \cite{Faria:2010,Li:2013,MOca:12, MOca:15,MOca:06,Muroya:2014,Shi:2012}.
In this section, we study  criteria to have this
partial extinction associated with the sufficient conditions  in Theorem \ref{thm3.2}  for the global attractivity, not of the saturated equilibrium of the original system \eqref{eq} but instead of the positive equilibrium of the reduced and rearranged system \eqref{reduced-eq}. In other words, and with the notation in Section 2, we address the  question: when does  the  global attractivity of the positive equilibrium $\tilde E^{*,p}$ of the reduced and rearranged system \eqref{reduced-eq}   imply the global attractivity of the equilibrium $E^{*,p}$ of \eqref{eq}?


 \begin{thm}\label{thm4.1} For system \eqref{eq} under (H0), assume that  for some $p\in \{1,\dots, n-1\}$ the following holds: \vskip 0cm
 (i)  $E^{*,p}=(x_1^*,u_1^*,\dots, x_p^*,u_p^*,0,0\ldots,0,0)$ is a saturated equilibrium of \eqref{eq};\vskip 0cm
 (ii) for any positive solution $(x_1(t),u_1(t),\dots,x_n(t),u_n(t))$ of \eqref{eq}, $$x_q(t)\in L^2[0,\infty)\q {\rm and}\q
 \lim_{t \to \infty}x_q(t)=0,\q {\rm for}\  q=p+1,\dots,n;$$\vskip 0cm
  (iii)  
for the ``reduced and rearranged system" \eqref{reduced-eq},
the matrix $\hat{M}^{(p)}_0:=[\de_{ij}\mu_i-|a_{ij}|]\ (i,j=1,\dots,p)$ is a non-singular M-matrix. \vskip 0cm
 Then this saturated equilibrium $E^{*,p}$ is a global attractor for \eqref{eq}. 
\end{thm}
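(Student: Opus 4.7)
The plan is to adapt the Lyapunov functional approach from the proof of Theorem \ref{thm3.1}, but now summing only over the indices $i=1,\dots,p$ corresponding to the ``surviving'' species, and treating the contributions of the species $x_q(t)$, $q>p$, as integrable perturbations by means of hypothesis (ii).

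First, by Theorem \ref{thm2.1}(i) (since $\hat M_0^{(p)}$ being a non-singular M-matrix forces the associated $M_0^-$ submatrix to be so, and the full system \eqref{eq} is dissipative under the mild hypothesis via the original $M_0^-$ being a non-singular M-matrix as well, after noting that $|a_{ij}|\ge a_{ij}^-$), all positive solutions are bounded. Then, invoking Lemma \ref{lem3.1} applied to $\hat M_0^{(p)}$, I pick positive vectors $\eta=(\eta_1,\dots,\eta_p)$, $q=(q_1,\dots,q_p)$ satisfying an analog of \eqref{eq-eta_tilde} for the indices $1,\dots,p$, and rescale $\bar x_i=x_i/q_i$, $\bar u_i=u_i/q_i$ ($1\le i\le p$), so that we may take $q_i=1$.

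Next, I introduce the reduced Lyapunov functional
\begin{equation*}
U(t)=\sum_{i=1}^p\eta_i\Big(G(x_i(t),x_i^*)+\frac{c_i}{d_i}\,\frac{(u_i(t)-u_i^*)^2}{2}\Big)+U_2(t),
\end{equation*}
where $U_2(t)$ is defined as in \eqref{U2} but with the outer sum running only over $i=1,\dots,p$ and the inner sums over $j=1,\dots,p$. Computing $\dot U$ along a positive solution of \eqref{eq} proceeds exactly as in the proof of Theorem \ref{thm3.1}, with the key difference that, for each $i\le p$, the equation for $x_i'$ contains the additional coupling term $-\sum_{j=p+1}^n a_{ij}\int_0^\infty K_{ij}(s)x_j(t-s)\,ds$. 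Collecting all such cross contributions yields a residual term
\begin{equation*}
R(t)=-\sum_{i=1}^p\eta_i(x_i(t)-x_i^*)\sum_{j=p+1}^n a_{ij}\int_0^\infty K_{ij}(s)x_j(t-s)\,ds.
\end{equation*}
The main obstacle is to show that $R(t)$, or a suitable upper bound of it, is integrable on $[0,\infty)$. I would use $|2\alpha\beta|\le\alpha^2+\beta^2$ to bound
\begin{equation*}
|R(t)|\le \frac{\vare'}{2}\sum_{i=1}^p(x_i(t)-x_i^*)^2+C\sum_{i=1}^p\sum_{j=p+1}^n\Big(\int_0^\infty K_{ij}(s)x_j(t-s)\,ds\Big)^2,
\end{equation*}
for $\vare'>0$ small and some constant $C>0$. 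The first piece is absorbed by the negative terms in $\dot U$ by choosing $\vare'$ sufficiently small, as in \eqref{cond-varepsilon}. The key estimate is that the second piece lies in $L^1[0,\infty)$: by Cauchy-Schwarz, $\bigl(\int_0^\infty K_{ij}(s)x_j(t-s)\,ds\bigr)^2\le \int_0^\infty K_{ij}(s)x_j^2(t-s)\,ds$, and then Fubini yields
\begin{equation*}
\int_0^T\!\!\int_0^\infty K_{ij}(s)x_j^2(t-s)\,ds\,dt\le M^2\int_0^\infty sK_{ij}(s)\,ds+\|x_j\|_{L^2[0,\infty)}^2,
\end{equation*}
where $M$ is a uniform bound for the initial data and the solution, and both terms on the right are finite thanks to \eqref{1.3} and hypothesis (ii).

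Combining everything produces
\begin{equation*}
\dot U(t)\le -\sum_{i=1}^p\bigl[A_i(x_i(t)-x_i^*)^2+B_i(u_i(t)-u_i^*)^2\bigr]+g(t),
\end{equation*}
with $A_i,B_i>0$ and $g\in L^1[0,\infty)$, $g\ge 0$. Integrating on $[0,t]$ and using that $U\ge 0$ and that $g$ is integrable, I conclude that $(x_i-x_i^*)^2,(u_i-u_i^*)^2\in L^1[0,\infty)$ for $i=1,\dots,p$. Since solutions and their derivatives are bounded (boundedness of derivatives follows from boundedness of solutions together with the equations \eqref{eq}), these functions are uniformly continuous, so Barbalat's Lemma gives $x_i(t)\to x_i^*$ and $u_i(t)\to u_i^*$ for $i=1,\dots,p$. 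Finally, for $q>p$, hypothesis (ii) gives $x_q(t)\to 0$, and then applying the variation-of-constants formula to the linear equation $u_q'(t)=-e_qu_q(t)+d_qx_q(t)$ yields $u_q(t)\to 0$. Thus $(x(t),u(t))\to E^{*,p}$, completing the proof.
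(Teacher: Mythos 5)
Your overall strategy --- a Lyapunov functional summed only over the surviving indices $i=1,\dots,p$, with the coupling to the species $x_q$, $q>p$, treated as a perturbation controlled via hypothesis (ii) --- is exactly the paper's, and your variant for the residual term (showing it is in $L^1[0,\infty)$ via Cauchy--Schwarz and Fubini, then integrating $\dot U$ directly) is a legitimate alternative to the paper's route, which instead shows the perturbation $h(t)$ tends to $0$, subtracts its bounded primitive $H(t)$ from $U$, and applies Barbalat's Lemma to the resulting nonincreasing functional. There is, however, one genuine gap: your justification of boundedness of solutions fails. Hypothesis (iii) concerns only the $p\times p$ block $[\de_{ij}\mu_i-|a_{ij}|]_{i,j\le p}$ and imposes nothing on rows $p+1,\dots,n$, so the full $n\times n$ matrix $M_0^-$ need not be a non-singular M-matrix (one may perfectly well have $\mu_q-a_{qq}^-\le 0$ for some $q>p$ while all hypotheses of the theorem hold), and Theorem \ref{thm2.1}(i) cannot be invoked. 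Since boundedness of $x_i,u_i$ for $i\le p$ is what makes $U_2$ well defined, bounds $x_i'$, and yields the uniform continuity needed for Barbalat's Lemma, this is not a cosmetic omission.

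The paper fills precisely this hole with a separate argument (Lemma \ref{lem4.3}): using hypothesis (ii), one picks $T_0$ so that the contributions of $x_q$, $q>p$, to the first $p$ equations are uniformly small for $t\ge T_0$, and then a maximum-norm contradiction argument, relying only on the existence of a positive vector $\eta$ with $\hat{M}^{(p)}_0\eta>0$, shows that $(x_1(t),\dots,x_p(t))$ cannot become unbounded; boundedness of the controls $u_i$ then follows from integrating $u_i'=-e_iu_i+d_ix_i$ (Lemma \ref{lem4.2}). You need to supply some such argument --- the rest of your proof then goes through.
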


First, we give some auxiliary results.

\begin{lem}\label{lem4.1}  Let $K_{ij}(s)$ satisfy \eqref{1.2}, \eqref{1.3} and $x_j(t)$ be a component of a solution of \eqref{eq}-\eqref{initial}. \vskip 0cm
(i) If $\dps\lim_{t\to \infty} x_j(t)=0$ and $\al>0$, then $\dps\lim_{t \to \infty}\int_0^{\infty}K_{ij}(s)x_j^\al(t-s)\, ds=0.$\vskip 0cm
(ii) If $x_j\in L^2[0,\infty)$, then the function 
$\int_0^\infty K_{ij}(s)x_j^2(t-s)\, ds, \ t\ge 0,$
is in $L^1[0,\infty)$.
 \end{lem}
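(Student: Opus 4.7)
The plan is to handle (i) by a standard splitting of the convolution integral, exploiting that $x_j$ is bounded on $\R$ (bounded on $(-\infty,0]$ from the initial data \eqref{initial}, bounded on $[0,\infty)$ by dissipativity from Theorem \ref{thm2.1}, since the M-matrix hypothesis in the context where Lemma \ref{lem4.1} is applied guarantees $M_0^-$ is a non-singular M-matrix), and to handle (ii) by a single application of Fubini--Tonelli that reduces everything to the first-moment condition \eqref{1.3}.

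For (i), fix $\vare>0$ and let $M>0$ be such that $x_j^\al(u)\le M$ for all $u\in\R$. Since $K_{ij}\in L^1[0,\infty)$ with $\int_0^\infty K_{ij}(s)\,ds=1$, pick $T>0$ with $M\int_T^\infty K_{ij}(s)\,ds<\vare/2$. Since $\lim_{t\to\infty}x_j(t)=0$, pick $t_0>0$ such that $x_j^\al(u)<\vare/2$ for $u\ge t_0$. Then for $t\ge t_0+T$, splitting the integral at $s=T$ gives
\begin{equation*}
\int_0^\infty K_{ij}(s)x_j^\al(t-s)\,ds \le \frac{\vare}{2}\int_0^T K_{ij}(s)\,ds+M\int_T^\infty K_{ij}(s)\,ds<\vare,
\end{equation*}
because $t-s\ge t_0$ for $s\in[0,T]$.

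For (ii), since the integrand is non-negative, Fubini--Tonelli and the substitution $u=t-s$ yield
\begin{equation*}
\int_0^\infty\!\!\int_0^\infty K_{ij}(s)x_j^2(t-s)\,ds\,dt=\int_0^\infty K_{ij}(s)\left(\int_{-s}^0 x_j^2(u)\,du+\int_0^\infty x_j^2(u)\,du\right)ds.
\end{equation*}
Bounding $\int_{-s}^0 x_j^2(u)\,du\le s\,\|\var_j\|_\infty^2$ via the bounded initial data, and writing $\|x_j\|_2^2:=\int_0^\infty x_j^2(u)\,du<\infty$ by hypothesis, the right-hand side is at most
\begin{equation*}
\|\var_j\|_\infty^2\int_0^\infty s K_{ij}(s)\,ds+\|x_j\|_2^2\int_0^\infty K_{ij}(s)\,ds,
\end{equation*}
which is finite by \eqref{1.2}--\eqref{1.3}. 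This shows $t\mapsto\int_0^\infty K_{ij}(s)x_j^2(t-s)\,ds$ is in $L^1[0,\infty)$.

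Neither step presents a real obstacle; both arguments are routine, but the conceptual point worth emphasizing is that the first-moment condition \eqref{1.3}, rather than the mere $L^1$-normalization \eqref{1.2}, is what makes (ii) succeed: it is exactly what is needed to absorb the contribution $\int_{-s}^0 x_j^2\,du$ coming from the infinite-delay tail of the initial history.
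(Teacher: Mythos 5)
Your proof is correct and follows essentially the same route as the paper: part (i) by splitting the convolution at a finite $T$ using boundedness and the $L^1$ tail of $K_{ij}$, and part (ii) by Tonelli plus the first-moment condition \eqref{1.3} to absorb the $\int_{-s}^{0}x_j^2$ contribution (the paper phrases this with a finite upper limit $b$ and a bound uniform in $b$, which is the same computation). The only quibble is that your appeal to dissipativity via Theorem \ref{thm2.1} to bound $x_j$ in part (i) is unnecessary: $\lim_{t\to\infty}x_j(t)=0$, continuity on $[0,\infty)$, and the bounded initial data already give $\sup_{\R}x_j<\infty$ without any M-matrix hypothesis.
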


\begin{proof} Let $X(t)=(x_1(t),u_1(t),\dots,x_n(t),u_n(t))$ be a solution  of \eqref{eq}-\eqref{initial}.

(i) Let $M>0$ be such that $\sup_{t\in\R}|x_j^\al(t) |\le M$ and fix any $i\in\{1,\dots,n\}$. For any  $\vare>0$, there are $T_0, T_1>0$ such that
$\int_{T_0}^\infty K_{ij}(s)\, ds<\vare/(2M)$ and $0\le x_j^\al(t)<\vare /2$ for $t\ge T_1$. Thus, for $t\ge T_0+T_1$ one obtains
\begin{equation*}
\begin{split}\int_0^\infty K_{ij}(s)x_j^\al(t-s)\, ds &=\int_0^{T_0} K_{ij}(s)x_j^\al(t-s)\, ds +\int_{T_0}^\infty K_{ij}(s)x_j^\al(t-s)\, ds \\
&\le \frac{\vare}{2}\int_0^{T_0} K_{ij}(s)\, ds+M\int_{T_0}^\infty K_{ij}(s)\, ds <\vare.
\end{split}
\end{equation*}

(ii)  Fix $b>0$, and write
$$\int_0^b dt\int_0^\infty K_{ij}(s)x_j^2(t-s)\, ds=\int_0^\infty K_{ij}(s)\left (\int_{-s}^{b-s}x_j^2(y)\, dy\right) ds.$$
Let $M>0$ be such that $\sup_{t\le 0}x^2_j(t)\le M$ and consider any $s\ge 0$. If $b-s\le 0$, then 
$\int_{-s}^{b-s}x_j^2(y)\, dy\le \int_{-s}^0x_j^2(y)\, dy\le Ms$.  If $b-s> 0$, then 
$$\int_{-s}^{b-s}x_j^2(y)\, dy=\int_{-s}^0x_j^2(y)\, dy+\int_{0}^{b-s}x_j^2(y)\, dy\le sM+\|x_j\|_{L^2[0,\infty)}^2.$$
Hence, using \eqref{1.3}, for any $b>0$ we have
$$\int_0^b dt\int_0^\infty K_{ij}(s)x_j^2(t-s)\, ds\le M\int_0^\infty sK_{ij}(s)\, ds+\|x_j\|_{L^2[0,\infty)}^2<\infty,$$ which proves that the function $t\mapsto \int_0^\infty K_{ij}(s)x_j^2(t-s)\, ds$ is integrable on $[0,\infty)$.  
\end{proof}

\begin{lem}\label{lem4.2} Consider a solution $X(t)=(x_1(t),u_1(t),x_2(t),u_2(t),\ldots,x_n(t),u_n(t))$  of \eqref{eq} with  initial conditions  \eqref{initial}. Then,
$$0\le \liminf_{t\to \infty} x_i(t)\le  \frac{e_i}{d_i} \liminf_{t\to \infty} u_i(t)\le \frac{e_i}{d_i}\limsup_{t\to \infty} u_i(t)\le \limsup_{t\to \infty} x_i(t),\q i=1,\dots,n.$$
\end{lem}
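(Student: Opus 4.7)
The plan is to exploit the fact that the $u_i$-equation is a linear, scalar, first-order ODE forced by $x_i$, so $u_i$ can be written explicitly via variation of constants and then squeezed between the $\liminf$ and $\limsup$ of $x_i$.

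Concretely, first I would note that the second equation of \eqref{eq} yields, for each $i=1,\dots,n$ and $t\ge 0$,
\begin{equation*}
u_i(t)=u_i(0)e^{-e_it}+d_i\int_0^t e^{-e_i(t-s)}x_i(s)\,ds.
\end{equation*}
Positivity of the initial data and of $x_i(t)$ together with $e_i,d_i>0$ guarantees $u_i(t)>0$ for all $t\ge 0$; this handles the trivial first inequality $0\le\liminf x_i(t)$ and the middle inequality $\liminf u_i\le\limsup u_i$.

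Next, set $\alpha_i:=\liminf_{t\to\infty}x_i(t)\in[0,\infty]$ and $\beta_i:=\limsup_{t\to\infty}x_i(t)\in[0,\infty]$. Given any $\vare>0$ (and, if $\beta_i=\infty$, any $N>0$ in place of $\beta_i+\vare$; the argument below then gives $\limsup u_i\le\infty$ trivially), choose $T=T(\vare)>0$ such that
\begin{equation*}
\alpha_i-\vare\le x_i(s)\le \beta_i+\vare\qquad\text{for all }s\ge T.
\end{equation*}
Splitting the variation-of-constants integral at $T$, the portion $u_i(0)e^{-e_it}+d_i\int_0^T e^{-e_i(t-s)}x_i(s)\,ds$ tends to $0$ as $t\to\infty$ (it is bounded by a constant times $e^{-e_i(t-T)}$), while
\begin{equation*}
\frac{d_i(\alpha_i-\vare)}{e_i}\bigl(1-e^{-e_i(t-T)}\bigr)\le d_i\int_T^t e^{-e_i(t-s)}x_i(s)\,ds\le \frac{d_i(\beta_i+\vare)}{e_i}\bigl(1-e^{-e_i(t-T)}\bigr).
\end{equation*}
Letting $t\to\infty$ gives $\dfrac{d_i(\alpha_i-\vare)}{e_i}\le\liminf_{t\to\infty}u_i(t)$ and $\limsup_{t\to\infty}u_i(t)\le\dfrac{d_i(\beta_i+\vare)}{e_i}$, and then sending $\vare\to 0^+$ yields
\begin{equation*}
\frac{d_i}{e_i}\liminf_{t\to\infty}x_i(t)\le\liminf_{t\to\infty}u_i(t),\qquad \limsup_{t\to\infty}u_i(t)\le\frac{d_i}{e_i}\limsup_{t\to\infty}x_i(t),
\end{equation*}
which, after multiplying through by $e_i/d_i>0$, is precisely the outer two inequalities of the lemma.

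There is no real obstacle here: the only mild care needed is to handle the case $\beta_i=+\infty$ (in which the corresponding inequality reads $\le+\infty$ and is automatic) and to record that positivity of the solution handles the inequality $0\le\liminf x_i(t)$. Everything else is a direct application of the explicit representation of $u_i$ and standard $\liminf$/$\limsup$ manipulations.
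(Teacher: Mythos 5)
Your proof is correct and follows essentially the same route as the paper: the paper likewise integrates the linear $u_i$-equation to get $u_i(t)=u_i(0)e^{-e_it}+d_ie^{-e_it}\int_0^te^{e_is}x_i(s)\,ds$ and states that the estimates follow. You have simply supplied the standard $\liminf$/$\limsup$ squeezing details that the paper leaves implicit.
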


\begin{proof} Integration of
$u_i'(t)=-e_i u_i(t)+d_i x_i(t)$ gives
$u_i(t)=u_i(0)e^{-e_it}+d_ie^{-e_it}\int_0^te^{e_is}x_i(s)\, ds$ for $ t\ge 0,$ which leads to the above estimates. 
\end{proof}

\begin{lem}\label{lem4.3} Under   the hypotheses  (ii) and (iii) of Theorem \ref{thm4.1} (except the requirement that $x_q(t)\in L^2[0,\infty)$ for $q=p+1,\dots,n$), all  solutions of \eqref{eq} with initial conditions \eqref{initial} are bounded. \end{lem}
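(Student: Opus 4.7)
The plan is to reduce the problem to bounding the leading components $(x_i,u_i)$ for $i\le p$, since the tail components are already controlled by the hypotheses. By hypothesis (ii), $x_q(t)\to 0$ for $q=p+1,\dots,n$, so each $x_q$ is bounded on $[0,\infty)$; Lemma \ref{lem4.2} then yields boundedness of $u_q(t)$ for $q>p$. By Lemma \ref{lem4.1}(i), for $i\le p$ and $j>p$ the integral $\int_0^\infty K_{ij}(s)x_j(t-s)\,ds$ tends to $0$ as $t\to\infty$ and is therefore uniformly bounded in $t\ge 0$. Absorbing these bounds into an effective growth rate $B_i$, using $-c_i\int_0^\infty G_i(s)u_i(t-s)\,ds\le 0$ and $-a_{ij}x\le a_{ij}^- x$ for $x\ge 0$, I would obtain the differential inequality
\begin{equation*}
x_i'(t)\le x_i(t)\Bigl[B_i-\mu_i x_i(t)+\sum_{j=1}^p a_{ij}^-\int_0^\infty K_{ij}(s)x_j(t-s)\,ds\Bigr],\q i=1,\dots,p,\ t\ge 0.
\end{equation*}

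Next I would exploit the M-matrix hypothesis. The matrix $M_0^{(p),-}:=[\delta_{ij}\mu_i-a_{ij}^-]_{i,j=1}^p$ dominates $\hat M_0^{(p)}$ entrywise, so any positive vector $v$ with $\hat M_0^{(p)}v>0$ also satisfies $M_0^{(p),-}v\ge \hat M_0^{(p)}v>0$, and Lemma \ref{lem2.1} yields that $M_0^{(p),-}$ is itself a non-singular M-matrix. Fix such $v=(v_1,\dots,v_p)$ together with $\alpha>0$ chosen so that $\mu_iv_i-\sum_{j=1}^p a_{ij}^- v_j\ge \alpha v_i$ for all $i\le p$, and set
\begin{equation*}
N(t):=\sup_{s\le t}\max_{1\le i\le p}\frac{x_i(s)}{v_i},\q t\ge 0,
\end{equation*}
which is well-defined (the initial data are bounded and $x_i$ is continuous on $[0,t]$), non-decreasing, and continuous on $[0,\infty)$.

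Finally, I would argue by contradiction. Assuming $N$ is unbounded, for each integer $n$ exceeding $N(0)$ define $t_n:=\inf\{t\ge 0:N(t)\ge n\}$; then $t_n\to\infty$, continuity forces $N(t_n)=n$ and the existence of an index $i_n$ with $x_{i_n}(t_n)=v_{i_n}n$, and the inequality $x_{i_n}(s)\le v_{i_n}n$ for $s\le t_n$ guarantees $x_{i_n}'(t_n)\ge 0$ (left-maximum). Since $N$ is non-decreasing, $x_j(t_n-s)\le v_j N(t_n-s)\le v_j n$ for every $s\ge 0$ and every $j\le p$; evaluating the differential inequality at $t_n$ therefore yields
\begin{equation*}
0\le x_{i_n}'(t_n)\le x_{i_n}(t_n)\Bigl[B_{i_n}-\bigl(\mu_{i_n}v_{i_n}-\sum_{j=1}^{p} a_{i_n,j}^-v_j\bigr)n\Bigr]\le x_{i_n}(t_n)\bigl[B_{i_n}-\alpha v_{i_n}n\bigr],
\end{equation*}
forcing $n\le B_{i_n}/(\alpha v_{i_n})\le\max_i B_i/(\alpha\min_i v_i)$, a contradiction. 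Hence $N$, and therefore each $x_i$ with $i\le p$, is bounded; Lemma \ref{lem4.2} then yields boundedness of $u_i$ for $i\le p$, completing the proof. The delicate point is this last supremum argument: producing the times $t_n$ at which the supremum is attained at the current instant with non-negative derivative. This is standard but needs the continuity of $N$ and the left-maximum property of $x_{i_n}$ at $t_n$; a Lyapunov-type alternative (adding a quadratic auxiliary term analogous to $U_2$ in the proof of Theorem \ref{thm3.1}) would also work but is less direct.
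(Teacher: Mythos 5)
Your proof is correct and follows essentially the same route as the paper: both extract a positive vector from the non-singular M-matrix hypothesis (the paper uses $\hat M_0^{(p)}v>0$ directly, you pass to the dominating matrix $[\delta_{ij}\mu_i-a_{ij}^-]$), scale so the components are compared against that vector, and derive a contradiction at the first time the scaled solution reaches a large threshold, where the derivative must be non-negative yet the differential inequality forces it to be negative. The only differences are cosmetic: you absorb the tail terms into uniform constants $B_i$ while the paper waits until $t\ge T_0$ where they are smaller than $\delta$, and your running-supremum function $N(t)$ formalizes the same ``first crossing time'' argument.
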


\begin{proof} It was already observed that solutions  $X(t)$ of \eqref{eq} with  initial conditions  \eqref{initial} are defined and positive for $t\ge 0$. Fix a solution $X(t)=(x_1(t),u_1(t),x_2(t),u_2(t),\ldots,x_n(t),u_n(t))$.


Write the uncontrolled community matrix $M_0$ as
\begin{equation}\label{M0,11}
M_0=
\left [
\begin{array}{cc}
M_{0,11}& A_{12} \\
A_{21}& {M}_{0,22}\\
\end{array}
\right], 
\end{equation}
where $M_{0,11}:=[\de_{ij}\mu_i+a_{ij}]\ (i,j=1,\dots,p)$ is the $p\times p$ uncontrolled community matrix 
for the reduced  system \eqref{reduced-eq}
 and ${M}_{0,22}$ is an $(n-p)\times (n-p)$ matrix.
 By hypothesis (iii),  $\hat M_{0,11}:=\hat M_0^{(p)}=\big [\de_{ij}\mu_i-|a_{ij}|\big ]_{p\times p}$ is a non-singular M-matrix, hence there is a positive vector $\eta\in\R^p$ such that $\hat M_{0,11}\eta >0$. After a scaling, take $\eta=(1,\dots,1)$. Hence, one can choose $\de>0$ small enough so that
 $$\mu_i-\sum_{j=1}^p |a_{ij}|-\de >0,\q i=1,\dots, p. $$
 Define  $$h_i(t):=\sum_{j=p+1}^n a_{ij}\int_0^\infty K_{ij}(s)x_j(t-s)\, ds,\q t\ge 0.$$
 By assumption (ii) and Lemma \ref{lem4.1}, choose  $T_0>0$ large such that $x_j(t)\le \de$ for $t\ge T_0,j=p+1,\dots,n$ and $|h_i(t)|\le \de $ for $t\ge T_0, i=1,\dots, p$.

Consider $\R^p$ endowed with the maximum norm $|\cdot|$.
 We claim that
 $\sup_{t\ge 0}|(x_1(t),\dots,x_p(t))|<\infty.$
 
 Otherwise, for any $K>\max \{1+\frac{b_1}{\de},\dots, 1+\frac{b_p}{\de},\sup_{t\le T_0}|(x_1(t),\dots,x_p(t))|\}$, there is $T=T(K)>T_0$ such that
 $$|(x_1(T),\dots,x_p(T))|\ge K\q {\rm and}\q |(x_1(t),\dots,x_p(t))|\le |(x_1(T),\dots,x_p(T))|\ {\rm for}\  t\le T.$$
Choose $i\in \{ 1,\dots,p\}$  such that
 $x_i(T)=|(x_1(T),\dots,x_p(T))|$. By the definition of $T$, $x_i'(T)\ge 0$. 
 On the other hand, 
 \begin{equation}
 \begin{split}
 x_i'(T)&\le x_i(T)\bigg ( b_i-\mu_i x_i(T)+\sum_{j=1}^p |a_{ij}| \int_0^\infty K_{ij}(s) x_j(T-s)\, ds+|h_i(T)|\bigg)\\
& \le x_i(T)\bigg ( b_i-\mu_i x_i(T)+\sum_{j=1}^p |a_{ij}|x_i(T) +\de \bigg)\\
&=  x_i(T)\bigg[b_i-x_i(T)\Big(\mu_i -\sum_{j=1}^p |a_{ij}| \Big )+\de\bigg]<  x_i(T)\left [ b_i-(x_i(T)-1)\de\right]<0,
\end{split}
\end{equation}
which is not possible.
 This implies that  $(x_1(t),\dots,x_p(t))$ is bounded for $t\ge 0$, and from Lemma \ref{lem4.2} $X(t)$ is bounded on $[0,\infty)$ as well.
 \end{proof}

\begin{proof}[Proof of Theorem  \ref{thm4.1}] 
Since the $p\times p$ matrix $\hat{M}^{(p)}_0=[\de_{ij}\mu_i-|a_{ij}|]$ is a non-singular M-matrix, there are $(q_1,\dots ,q_p)>0, \eta=(\eta_1,\dots,\eta_p)>0$ such that \eqref{cond-eta-q} is satisfied with $n=p$ and $\tilde{a}_{ij}=a_{ij}$.
As already observed in Remark \ref{rmk3.1}, the scaling of the variables $x_i(t)\mapsto q_i^{-1}x_i(t), u_i(t)\mapsto q_i^{-1}u_i(t)$ allows us to consider \eqref{reduced-eq} with $\mu_i, a_{ij},c_i$ replaced by $\mu_iq_i,a_{ij}q_j, c_iq_i$, respectively, for $1\le i,j\le p$. In this way, we may take
  $q_1=\dots=q_p=1$ in  \eqref{cond-eta-q}, and  assume without loss of generality that 
 \eqref{eq-eta_tilde} holds with $n=p$ and $\tilde{a}_{ij}=a_{ij}$; i.e.,  there exists a positive vector $\eta=(\eta_1,\dots,\eta_p)$ such that
$\eta_i \mu_i>\sum_{j=1}^p \frac{1}{2}\Big (\eta_i |{a}_{ij}|+\eta_j |a_{ji}|\Big)$ for $ i=1,\ldots,p.$
 Hence, there is  $\vare_0>0$ sufficiently small such that for  $\vare\in (0,\vare_0)$ 
\begin{equation}\label{05}
\displaystyle{ \gamma_i= \gamma_i(\vare):=\eta_i \mu_i-\sum_{j=1}^p \frac{1}{2}\biggl(\eta_i |{a}_{ij}|+\eta_j |a_{ji}|\biggr)-
\frac{\vare\eta_i}{2}\bigg (c_i+\sum_{j=p+1}^n|a_{ij}|\bigg)>0, \q  i=1,\ldots,p}.
\end{equation}

Note that, from  hypothesis (ii) and Lemma \ref{lem4.1}(i) with $\al=2$,  
$$
\lim_{t \to \infty}\int_0^{\infty}K_{iq}(s)x_q^2(t-s)\, ds=0
$$
for any  $ i \in \{1,\ldots,n\}$ and $q\in\{ p+1,\dots,n\}$. Therefore, for any fixed $\vare>0$, it follows that
\begin{equation}\label{04}
h(t)=h_\vare(t):=\frac{1}{\vare}\sum_{i=1}^p\sum_{j=p+1}^n  \eta_i\frac{|{a}_{ij}|}{2}\int_0^{\infty}K_{ij}(s)x_j^2(t-s) ds\to 0\q {\rm as} \q 
t\to \infty.
\end{equation}

Take $E^*=E^{*,p}$ as in the statement of the theorem. 
Now, define (compare with \eqref{U1})
\begin{equation}\label{U1-red}
U_1(t)=\sum_{i=1}^p \eta_i \biggl(G(x_i(t),x_i^*)+\frac{c_i}{d_i}\frac{(u_i(t)-u_i^*)^2}{2}\biggr).
\end{equation}
 We now proceed as in the proof of Theorem \ref{thm3.1}, so some computations are omitted. Along positive solutions
$(x_1(t),u_1(t),x_2(t),u_2(t),\ldots,x_n(t),u_n(t))$ of system  \eqref{eq}, 
for any fixed  $\vare\in (0,\vare_0)$ we get
\begin{equation}\label{U-1'}
\left.
\begin{array}{ll}\dot U_1(t)&
\displaystyle{\leq \sum_{i=1}^p \eta_i \biggl\{-\mu_i (x_i(t)-x_i^*)^2-\frac{c_i e_i}{d_i}(u_i(t)-u_i^*)^2} \\
&\displaystyle{+\sum_{j=1}^p \frac{|{a}_{ij}|}{2}\biggl( (x_i(t)-x_i^*)^2+\int_0^{\infty}K_{ij}(s)(x_j(t-s)-x_j^*)^2 ds\biggr)} \\
&\displaystyle{+\sum_{j=p+1}^n \frac{|{a}_{ij}|}{2}\biggl(\vare (x_i(t)-x_i^*)^2+\frac{1}{\vare}\int_0^{\infty}K_{ij}(s)x_j^2(t-s) ds\biggr)} \\
&\displaystyle{+\frac{c_i}{2} \biggl( \varepsilon (x_i(t)-x_i^*)^2+\frac{1}{\varepsilon} \int_0^{\infty}G_{i}(s)(u_i(t-s)-u_i(t))^2 ds\biggr) \biggr\}}.
\end{array}
\right.
\end{equation}
Now, set $U(t)=U_1(t)+U_2(t),t\ge 0,$ where
\begin{equation}
\left.
\begin{array}{ll}
\displaystyle{U_2(t)=\sum_{i=1}^p \eta_i \biggl(\sum_{j=1}^p \frac{|{a}_{ij}|}{2} \int_0^{\infty}K_{ij}(s) \int_{t-s}^t (x_j(u)-x_j^*)^2 du ds} \\
\ \ \ \ \ \ \ \ \ \ \ \displaystyle{+\frac{c_i}{2 \varepsilon} \int_0^{\infty}G_{i}(s) \int_{t-s}^t (u_i(u)-u_i(t))^2 du ds \biggr)}.
\end{array}
\right.
\end{equation}
Calculating the derivative of $U_2(t)$ along solutions as in \eqref{U-2} and adding up \eqref{U-1'}, from  \eqref{05} and \eqref{04} we obtain 
\begin{equation}\label{09}\dot{U}(t)
\leq -\sum_{i=1}^p \biggl(\gamma_i(x_i(t)-x_i^*)^2+\eta_i\frac{c_i e_i}{d_i}(u_i(t)-u_i^*)^2\biggr)+h(t)
\end{equation}
where $\gamma_i>0\, (1\le i\le p)$ and $h(t)\to 0$ as $t\to\infty$.

Next,  Lemma \ref{lem4.3}  allows us to conclude that $x_i(t),u_i(t)$  and  $x_i'(t),u_i'(t)$ are bounded  on $[0,\infty)$, therefore $x_i(t),u_i(t)$ are uniformly continuous on $[0,\infty)$. This and the assumptions imposed on the kernels $K_{ij},G_i$ imply that  $\dot U(t)$ and $h(t)$ are also uniformly continuous on $[0,\infty)$.
Write
$$f(t)=\sum_{i=1}^p \biggl(\gamma_i(x_i(t)-x_i^*)^2+\eta_i\frac{c_i e_i}{d_i}(u_i(t)-u_i^*)^2\biggr).$$We further define
$V(t)=U(t)-H(t),\ t\ge 0,$
where $H(t)=\int_0^t h(s)\, ds.$ From Lemma \ref{lem4.1}, $H(t)$ is bounded.
Clearly, $\dot V(t)=\dot U(t)-h(t)\le -f(t)\le 0$ on $[0,\infty)$, thus 
$V(t)\searrow c$ as $t\to\infty,$
for some $c\in\R$.
Since $\dot V(t)$ is uniformly continuous for $t\ge 0$, from Barbalat's lemma \cite[p.~5]{Gopal} , we conclude  that 
 $\lim_{t\to\infty} \dot V(t)=0,$
which implies that 
$\lim_{t\to\infty} f(t)=0.$
Thus, $x_i(t)\to x_i^*,\ u_i(t)\to u_i^*$ for $ i=1,\dots,p$, which ends the proof.
\end{proof}

\begin{rmk}\label{rmk4.1} {\rm In Theorem \ref{thm4.1}, it is possible that $x_i^*=0$ for some of the components $i\in\{1,\dots,p\}$ of the saturated equilibrium $E^*=(x_1^*,u_1^*,\dots, x_p^*,u_p^*,0,0\ldots,0,0)$. In this case,  after reordering the variables, $E^*$ takes the form $E^*=E^{*,k}=(x_1^*,u_1^*,\dots, x_k^*,u_k^*,0,0\ldots,0,0)$ as in \eqref{Ep}, with $x_i^{*,k}>0$ for some $k<p$, and, as in Theorem \ref{thm3.1},  in the definition of $\hat{M}^{(p)}_0$  one may replace $|a_{ij}|$ by $a_{ij}^-$ for $i,j=k+1,\dots ,p$.}
\end{rmk}
 
 Let $E^*=(x_1^*,u_1^*,\dots, x_p^*,u_p^*,0,0\ldots,0,0)$ (for some $p\in \{1,\dots,n-1\}$) be the saturated equilibrium of \eqref{eq}. In this situation, it is important to establish sufficient conditions for hypothesis (ii) in the statement of Theorem \ref{thm4.1} to be satisfied. 
 
With some additional conditions to \eqref{satu-cond-k} in Lemma \ref{lem2.3}, and 
based on the construction of a new Lyapunov functional (inspired however by the  approach in Hu {\it et al.} \cite{HTG}, Montes de Oca e P\'erez \cite{MOca:12} and Shi {\it et al.} \cite{Shi:2012}), we obtain a generalization of the result on partial extinction.

\begin{thm}\label{thm4.2}  Assume (H0) and that all solutions of  \eqref{eq} with initial conditions \eqref{initial} are bounded. For some $p\in\{1,2,\dots,n-1\}$ and some fixed $q \in\{p+1,\dots,n\}$, suppose that 
 there exists a nonnegative vector $\al=(\al_1,\dots,\al_p)\ge 0$, such that
\begin{equation}\label{k-q-cond}
\left\{
\begin{split}
    &\sum_{i=1}^p\al_ib_i-b_q>0\\
    &\sum_{i=1}^p\al_i (\de_{ij}\la_i+a_{ij})-a_{qj}\le 0,\q j=1,2,\dots,p\\
     &\sum_{i=1}^p\al_i a_{ij}-(\de_{qj}\la_q+a_{qj})\le 0,\q j=p+1,\dots,n
    \end{split}
    \right .
 \end{equation}
 If $\al=0$, in addition suppose that $\mu_q-a_{qq}^->0$.
Then, any positive solution $(x_1(t),u_1(t),\dots ,x_n(t),u_n(t))$ of \eqref{eq} satisfies
$x_q=O(e^{-\eta t})$ as $t\to \infty$ and some $\eta >0$; in particular,  $x_q\in L^2[0,\infty)$
and
$\displaystyle\lim_{t \to \infty}x_q(t)=0$.
\end{thm}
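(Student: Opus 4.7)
The plan is to build a Lyapunov-type functional whose derivative is bounded above by a negative constant along positive solutions; then, exploiting boundedness, this forces the coordinate $x_q(t)$ to decay exponentially. The key technical device is the standard \emph{shifted-kernel} identity: for each $L^1$ kernel $K$ with $\int_0^\infty sK(s)\, ds<\infty$, the tail function $\tilde K(s):=\int_s^\infty K(\tau)\, d\tau$ belongs to $L^1[0,\infty)$, and an integration by parts yields
\[
\int_0^\infty K(s)f(t-s)\, ds=f(t)-\frac{d}{dt}\int_0^\infty \tilde K(s)f(t-s)\, ds
\]
for continuous bounded $f$ with continuous bounded derivative. This allows me to replace each delayed term in \eqref{eq} by an instantaneous value at $t$ plus a time derivative of a correction integral that remains uniformly bounded (thanks to the boundedness hypothesis on solutions and condition \eqref{1.3}).

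Applying this identity to the $x_j$-integrals in \eqref{eq}, to the $u_i$-integral via $\tilde G_i$, and using the auxiliary relation $c_iu_i(t)=(c_id_i/e_i)x_i(t)-(c_i/e_i)u_i'(t)$ extracted from the $u$-equation, I define, for each $i\in\{1,\ldots,p\}\cup\{q\}$,
\[
\Phi_i(t):=\ln x_i(t)+\frac{c_i}{e_i}u_i(t)-\sum_{j=1}^n a_{ij}\int_0^\infty \tilde K_{ij}(s)x_j(t-s)\, ds-c_i\int_0^\infty \tilde G_i(s)u_i(t-s)\, ds.
\]
A direct computation along solutions of \eqref{eq} produces the delay-free identity
\[
\dot\Phi_i(t)=b_i-\sum_{j=1}^n(\delta_{ij}\lambda_i+a_{ij})x_j(t).
\]
I then set
\[
V(t):=\Phi_q(t)-\sum_{i=1}^p\alpha_i\Phi_i(t),
\]
whose derivative reads
\[
\dot V(t)=-\Bigl(\sum_{i=1}^p\alpha_i b_i-b_q\Bigr)+\sum_{j=1}^n\Bigl[\sum_{i=1}^p\alpha_i(\delta_{ij}\lambda_i+a_{ij})-(\delta_{qj}\lambda_q+a_{qj})\Bigr]x_j(t).
\]

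Since $q>p$ forces $\delta_{qj}=0$ for $j\le p$, the second inequality in \eqref{k-q-cond} shows the bracket is $\le 0$ for $j=1,\ldots,p$, and the third inequality shows it is $\le 0$ for $j=p+1,\ldots,n$. Combined with $x_j(t)>0$ and the first inequality, this gives
\[
\dot V(t)\le -\eta,\qquad \eta:=\sum_{i=1}^p\alpha_i b_i-b_q>0.
\]
Integration yields $V(t)\le V(0)-\eta t$. On the other hand, the boundedness of $x_j,u_i$ together with $\tilde K_{ij},\tilde G_i\in L^1$ makes the quantity $V(t)-\ln x_q(t)+\sum_{i=1}^p\alpha_i\ln x_i(t)$ uniformly bounded in $t$; combined with an upper bound $x_i(t)\le B$, this yields $\ln x_q(t)\le C-\eta t$ for a constant $C$. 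Hence $x_q(t)=O(e^{-\eta t})$, and consequently $x_q\in L^2[0,\infty)$ and $\lim_{t\to\infty}x_q(t)=0$.

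The main technical hurdle is rigorously justifying the shifted-kernel identity for general $L^1$ kernels and $C^1$ solutions with bounded past; this is precisely where \eqref{1.3} is used. A minor separate point is the degenerate case $\alpha=0$: the conditions \eqref{k-q-cond} then already force $\lambda_q+a_{qq}\ge 0$ and $a_{qj}\ge 0$ for $j\ne q$, and the same calculation gives $\dot V(t)\le b_q<0$; the auxiliary hypothesis $\mu_q>a_{qq}^-$ supplied in this case permits a shorter direct estimate on $(\ln x_q)'(t)$ that bypasses the construction of $\Phi_q$.
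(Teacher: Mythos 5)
Your argument is, up to taking logarithms, exactly the paper's: by Fubini, $\int_0^\infty\tilde K_{ij}(s)x_j(t-s)\,ds=\int_0^\infty K_{ij}(s)\int_{t-s}^tx_j(\theta)\,d\theta\,ds$, so your $V=\Phi_q-\sum_i\al_i\Phi_i$ is $\ln V_{p,q}$ for the functional \eqref{V} used in the paper, and your chain $\dot V\le-\eta$ plus boundedness of the correction integrals (via \eqref{1.3} and the boundedness of solutions) is the same estimate as the paper's $\dot V_{p,q}\le-\eta V_{p,q}$ combined with the lower bound $V_{p,q}(t)\ge C_1x_q(t)$. One correction is needed: the term in $\Phi_i$ must be $-\frac{c_i}{e_i}u_i(t)$, not $+\frac{c_i}{e_i}u_i(t)$; as written one gets $\dot\Phi_i=b_i-\mu_ix_i(t)+\frac{c_id_i}{e_i}x_i(t)-\sum_ja_{ij}x_j(t)-2c_iu_i(t)$ rather than the delay-free identity you claim. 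With the minus sign, the $-c_iu_i(t)$ produced by the shifted $G_i$-kernel cancels against $-\frac{c_i}{e_i}u_i'(t)=c_iu_i(t)-\frac{c_id_i}{e_i}x_i(t)$, the stated identity $\dot\Phi_i=b_i-\sum_j(\delta_{ij}\la_i+a_{ij})x_j(t)$ holds, and everything downstream, including your treatment of the case $\al=0$, goes through.
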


\begin{proof}
Fix $q >p$, and assume that for some non-negative constants   $\al_1,\dots,\al_p$ conditions  \eqref{k-q-cond} are satisfied. If $\al_1=\dots=\al_p=0$ and $\mu_q-a_{qq}^->0$, from \eqref{k-q-cond}  we have $b_q<0$ and  all the entries  $\de_{qj}\la_q+a_{qj}$ of the $q$-line of $M$ are nonnegative, therefore 
$$x_q'(t)\le x_q(t)\Big (b_q-\mu_q x_q(t)+a_{qq}^- \int_0^{\infty}K_{qq}(s)x_q(t-s)\, ds\Big ),$$ and the result follows by Remark \ref{rmk3.2} applied with $n=1$.

With at least some  $\al_i>0$,  consider the following Lyapunov functional:
\begin{equation}\label{V}
\left.
\begin{array}{ll}
V_{p,q}(t)&\displaystyle{=x_1^{-{\alpha}_1}(t) x_2^{-{\alpha}_2}(t)\ldots x_p^{-{\alpha}_p}(t) x_q(t)
\times \exp \Biggl\{\sum_{i=1}^{p}{\alpha}_i \biggl(\frac{c_i}{e_i} u_i(t)} \\
& \displaystyle{+\sum_{j=1}^n a_{ij} \int_0^{\infty} K_{ij}(s)\int_{t-s}^t x_j(\theta)\, d \theta ds+c_i \int_0^{\infty} G_i(s) \int_{t-s}^t u_i(\theta) \, d \theta ds\biggr)} \\
&  \displaystyle{-\biggl(\frac{c_q}{e_q} u_q(t)+\sum_{j=1}^{n} a_{qj} \int_0^{\infty} K_{qj}(s) \int_{t-s}^t x_j(\theta) \, d \theta ds} \\
&  \displaystyle{+c_q \int_0^{\infty} G_{q}(s) \int_{t-s}^t u_q(\theta) \, d \theta ds\biggr)\Biggr\} }. 
\end{array}
\right.
\end{equation}
For any positive solution $X(t)=(x_1(t),u_1(t),\dots, x_n(t),u_n(t))$ of \eqref{eq},
calculating the derivative of $V_{p,q}(t)$ with respect to $t>0$ along $X(t)$, we have 
\begin{equation*}
\left.
\begin{array}{ll}
\dot{V}_{p,q}(t)&= \displaystyle{V_{p,q}(t)\Biggl\{-\sum_{i=1}^{p}\al_i\biggl(b_i-\mu_i x_i(t)} \\
&\displaystyle{-\sum_{j=1}^n a_{ij}\int_0^{\infty} K_{ij}(s) x_j(t-s) d s-c_i \int_0^{\infty} G_i(s) u_i(t-s) ds \biggr)} \\
& \displaystyle{+\biggl(b_q-\mu_q x_q(t)-\sum_{j=1}^n a_{qj} \int_0^{\infty} K_{qj}(s) x_j(t-s) ds-c_q \int_0^{\infty} G_{q}(s) u_{q}(t-s) ds \biggr)} \\
& \displaystyle{
+\sum_{i=1}^p {\alpha}_i\Biggl [\frac{c_i}{e_i}(-e_i u_i(t)+d_i x_i(t))+\sum_{j=1}^n a_{ij} \int_0^{\infty} K_{ij}(s)[x_j(t)-x_j(t-s)]\, ds} \\
& \displaystyle{+c_i \int_0^{\infty} G_i(s)[u_i(t)-u_i(t-s)] ds\Biggr]} \\
& \displaystyle{-\biggl(\frac{c_q}{e_q}(-e_q u_q(t)+d_q x_q(t))+\sum_{j=1}^n a_{qj} \int_0^{\infty} K_{qj}(s)[x_j(t)-x_j(t-s)]\, ds} \\
& \displaystyle{+c_{q} \int_0^{\infty} G_{q}(s) \{u_{q}(t)-u_q(t-s)\} ds\biggr)\Biggr\}} ,
\end{array}
\right.
\end{equation*}
thus
\begin{equation}\label{DV}
\left.
\begin{array}{ll}
\dot{V}_{p,q}(t)&\displaystyle{=V_{p,q}(t)\biggl\{-\sum_{i=1}^{p}\al_i\biggl(b_i-\mu_i x_i(t)-\sum_{j=1}^n a_{ij} x_j(t)-\frac{c_i d_i}{e_i} x_i(t)\biggr)} \\
&\displaystyle{+\biggl(b_q-\mu_q x_q(t)-\sum_{j=1}^n a_{qj}  x_j(t)-\frac{c_q d_q}{e_q} x_q(t)\biggr)\biggl\}} \\
&\displaystyle{=V_{p,q}(t)\biggl\{-\sum_{i=1}^{p}\al_i\biggl(b_i-\sum_{j=1}^n (\delta_{ij} \lambda_i+a_{ij}) x_j(t)\biggr)+\biggl(b_q-\sum_{j=1}^n (\delta_{qj} \lambda_q+a_{qj}) x_j(t) \biggr)\biggr\}} \\
&\displaystyle{=V_{p,q}(t)\biggl\{-\biggl(\sum_{i=1}^p\al_i b_i-b_q\biggr)+\sum_{j=1}^p \biggl( \sum_{i=1}^p\al_i(\delta_{ij} \lambda_i+a_{ij})
-a_{qj}\biggr)x_j(t)} \\
& \displaystyle{+\sum_{j=p+1}^n \biggl( \sum_{i=1}^p\al_i a_{ij}-(\delta_{qj} \lambda_q+a_{qj})\biggr)x_j(t)\biggr\}}. 
\end{array}
\right.
\end{equation}
Define  $$\eta:=\sum_{i=1}^{p}\al_i b_i-b_q >0.$$
By \eqref{k-q-cond} and \eqref{DV}, we obtain 
${\dot{V}_{p,q}(t)} \leq -\eta V_{p,q}(t)$,
thus 
\begin{equation}\label{upperbddV}
V_{p,q}(t) \leq V_{p,q}(0)e^{-\eta t}.
\end{equation}
On the other hand, since the positive kernels $K_{ij}(t), G_j(t)$ satisfy  \eqref{1.3} and all coordinates $x_j(t),u_j(t), 1\le j\le n$, of the solution $X(t)$ are bounded from above by some positive constant $C$, we get
\begin{equation}\label{lowerbddV}
\begin{split}
V_{p,q}(t)&\ge C^{-(\al_1+\cdots +\al_p)} x_q(t) \\
&\times \exp \biggl\{-C\biggl[\sum_{j=1}^n\biggl(\sum_{i=1}^p  \al_i|a_{ij}|\int_0^{\infty} s K_{ij}(s) \, ds+|a_{qj}| \int_0^{\infty} s K_{qj}(s) \, ds\biggr)\\
&+\frac{c_q}{e_q} +c_q \int_0^{\infty} s G_{q}(s) \, ds\biggr]\biggr\}\ge  C_1 x_q(t),
\end{split}
\end{equation}
for some constant $C_1>0$. From the upper and lower estimates \eqref{upperbddV},  \eqref{lowerbddV}, we obtain $x_q(t)\le C_1^{-1} V_{p,q}(0)e^{-\eta t}$.
\end{proof} 

%
\begin{rmk}\label{rmk4.2} {\rm Suppose that $M$ is a P-matrix and that there exists $p\in \{1,\dots,n-1\}$ such that
the unique saturated equilibrium has the form 
$E^{*,p}=(x_1^*,u_1^*,x_2^*,u_2^*,\dots,x_p^*,u_p^*,0,0,\dots,0,0),$
with  $x_i^*\ge 0$ and $b_i=\sum_{j=1}^p (\la_i \de_{ij}+a_{ij})x_j^*$ for $ i=1,\dots,p,$ and $b_q<\sum_{j=1}^p a_{pj}x_j^*$ for $q=p+1,\dots,n$. Define $x^*=(x_1^*,\dots,x_p^*,0,\dots,0)$. In this writing,  for each $i=1,\dots,p$, we now  let   $x_i^*= 0$ if $(Mx^*)_i=b_i$, but  in \eqref{2.1} we demand $(Mx^*)_q>b_q$ for $q=p+1,\dots,n$. Proceeding as in Section 2,  instead of \eqref{satu-cond-k},  in this situation we have 
\begin{equation*}
\left\{
\begin{array}{ll}
R_i^p\ge 0, \ & \mbox{for any} \ i=1,\ldots,p \\
R_q^{p+1,q} < 0, &\mbox{for any} \ q=p+1,\dots,n,
\end{array}
\right.
\end{equation*}
and the strict inequality in \eqref{b-q} implies now that for each $q=p+1,\dots,n$ the first conditions in \eqref{k-q-cond} are satisfied with $\al_i^q=|a_{qi}|x_i^*,\, i=1,\dots,p$.}
\end{rmk}

From Theorems \ref{thm4.1} and \ref{thm4.2},  rather than Theorem \ref{thm3.1}, an alternative outcome  for global attractivity with partial extinction is as follows:

\begin{thm}\label{thm4.3}  Assume (H0) and  that $M$ is a P-matrix. Suppose that  the saturated equilibrium is neither positive nor trivial,  so it has the form 
$E^{*,p}=(x_1^*,u_1^*,x_2^*,u_2^*,\dots,x_p^*,u_p^*,0,0,\dots,0,0)\ne 0$  for some $p\in\{1,2,\dots,n-1\}$, with   $x_i^*\ge 0$ and $b_i=\sum_{j=1}^p a_{ij}x_j^*,\, i=1,\dots,p$.
In addition, assume that:\vskip 0cm
(i) the $n\times n$ matrix $M_0^-$ and the $p\times p$ matrix $\hat M_0^{(p)}=[\de_{ij}\mu_i-|a_{ij}|]$ are non-singular M-matrices;\vskip 0cm
(ii) there exist nonzero vectors $\al^{(q)}=(\al_1^{(q)},\dots,\al_p^{(q)})\ge 0$ such that conditions \eqref{k-q-cond} are satisfied for $q=p+1,\dots,n$.\vskip 0cm
Then $E^{*,p}$ is a global attractor of all positive solutions of \eqref{eq}.
In other words, with $x_i^*=R_i^p/R_0^p\, (1\le i\le p)$, any positive solution $(x_1(t),u_1(t),\dots ,x_n(t),u_n(t))$  of \eqref{eq} satisfies
\begin{equation*}
\left\{
\begin{array}{ll}
\lim_{t \to \infty}x_i(t)=x_i^*&\ {\rm for}\ j=1,\dots,p,\\ \lim_{t \to \infty}x_q(t)=0&\ {\rm for}\ q=p+1,\dots,n.\end{array}
\right.
\end{equation*}
\end{thm}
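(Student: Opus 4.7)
The plan is to obtain the theorem as a direct composition of Theorem \ref{thm4.2} and Theorem \ref{thm4.1}, after first verifying that all solutions are bounded. Since $M$ is a P-matrix, Theorem \ref{thm2.1}(ii) guarantees that $E^{*,p}$ is the unique saturated equilibrium of \eqref{eq}, and assumption (i), which includes $M_0^-$ being a non-singular M-matrix, ensures via Theorem \ref{thm2.1}(i) that \eqref{eq} is dissipative, so every positive solution $(x_1(t),u_1(t),\dots,x_n(t),u_n(t))$ is bounded on $[0,\infty)$.

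The next step is to invoke Theorem \ref{thm4.2} separately for each $q\in\{p+1,\dots,n\}$. Hypothesis (ii) supplies a nonzero nonnegative vector $\alpha^{(q)}=(\alpha_1^{(q)},\dots,\alpha_p^{(q)})$ satisfying the three conditions in \eqref{k-q-cond}. Since $\alpha^{(q)}\ne 0$, the case $\alpha=0$ of Theorem \ref{thm4.2} is not needed, and the boundedness already established covers its standing hypothesis. Thus, for each $q\in\{p+1,\dots,n\}$ there is $\eta_q>0$ such that $x_q(t)=O(e^{-\eta_q t})$ as $t\to\infty$; in particular,
\begin{equation*}
x_q\in L^2[0,\infty)\quad\text{and}\quad \lim_{t\to\infty} x_q(t)=0,\qquad q=p+1,\dots,n.
\end{equation*}

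With these two ingredients in hand, I would finally check that all three hypotheses of Theorem \ref{thm4.1} are met: (i) $E^{*,p}$ is a saturated equilibrium of \eqref{eq} by assumption; (ii) the $L^2$ integrability and convergence to zero of $x_q$ for $q>p$ just obtained; and (iii) $\hat M_0^{(p)}=[\delta_{ij}\mu_i-|a_{ij}|]_{1\le i,j\le p}$ is a non-singular M-matrix, which is part of assumption (i). Theorem \ref{thm4.1} therefore applies and delivers $\lim_{t\to\infty}x_i(t)=x_i^*$ and $\lim_{t\to\infty}u_i(t)=u_i^*$ for $i=1,\dots,p$. Combined with the extinction $\lim_{t\to\infty}x_q(t)=0$ from Theorem \ref{thm4.2} and the corresponding decay $\lim_{t\to\infty}u_q(t)=0$ (which follows from the linear equation for $u_q$ via Lemma \ref{lem4.2}), this is exactly the asserted global attractivity of $E^{*,p}$, with the componentwise limits displayed in the statement given by Cramer's formulas $x_i^*=R_i^p/R_0^p$ as in Lemma \ref{lem2.3}.

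Since the proof is purely a synthesis, there is no genuinely hard step; the only point demanding care is lining up the hypotheses correctly. The most delicate verification is that Theorem \ref{thm4.2}'s conclusion is strong enough to feed Theorem \ref{thm4.1}, namely the $L^2$ integrability of the extinguishing components, which Theorem \ref{thm4.2} delivers via the exponential decay rate $e^{-\eta_q t}$. The apparent discrepancy between the saturation condition $R_q^{p+1,q}\le 0$ in \eqref{satu-cond-k} and the strict inequality $\sum_{i=1}^p\alpha_i^{(q)}b_i-b_q>0$ required in \eqref{k-q-cond} is reconciled precisely by the hypothesis in (ii) that such $\alpha^{(q)}$ exist, in the spirit of Remark \ref{rmk4.2}.
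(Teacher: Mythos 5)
Your proposal is correct and matches the paper's intent exactly: the paper states Theorem \ref{thm4.3} as a direct consequence of combining Theorem \ref{thm4.2} (applied to each $q=p+1,\dots,n$ to get exponential decay, hence $L^2$ integrability and extinction of $x_q$) with Theorem \ref{thm4.1}, using $M_0^-$ a non-singular M-matrix for boundedness via Theorem \ref{thm2.1}(i), and offers no further argument. Your careful verification of how the hypotheses line up, in particular that the exponential decay supplies the $L^2$ condition needed by Theorem \ref{thm4.1}(ii), is precisely the synthesis the paper leaves implicit.
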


 Another consequence of Theorems \ref{thm4.1} and \ref{thm4.2} is given below.


\begin{thm}\label{thm4.4}  Assume (H0), that $M$ is a P-matrix and  $M_0^-$  a non-singular M-matrix.  Suppose also that
\begin{equation}\label{n,p=1}
a_{q1}\ge 0\q {\rm and}\q R_q^{2,q}:=(\la_1+a_{11})b_q-a_{q1}b_1<0\q {\rm for}\q q=2,\dots,n.
\end{equation}
Then $E^{*,1}=(x_1^*,u_1^*,0,0,\dots,0,0),$
with  $x_1^*=b_1/(\la_1+a_{11}), u_1^*=\frac{d_1}{e_1}x_1^*$, is  the saturated equilibrium of \eqref{eq}.
 Moreover, $x_q\in L^2[0,\infty)$ and $x_q(t)\to 0$ as $n\to\infty$, for all $q=2,\dots,n$ and all positive solutions $X(t)=(x_1(t),u_1(t),\dots ,x_n(t),u_n(t))$ of \eqref{eq}.\vskip 0cm
If in addition $b_1>0$ and  $\mu_1>|a_{11}|$,
  then $E^{*,1}$
 is a global attractor of all positive solutions.
\end{thm}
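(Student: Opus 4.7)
My plan is to derive Theorem \ref{thm4.4} as a specialization of Theorems \ref{thm4.1} and \ref{thm4.2} to the case $p=1$. I would first verify that $E^{*,1}$ is the (unique) saturated equilibrium. Since $M$ is a P-matrix, Theorem \ref{thm2.1}(ii) guarantees uniqueness, and $R_0^1=\la_1+a_{11}>0$ so $x_1^*=b_1/(\la_1+a_{11})$ is well-defined. The conditions of Lemma \ref{lem2.3} with $p=1$ are then immediate: $R_1^1=b_1$ has the correct sign, and the hypothesis $R_q^{2,q}<0$ combined with \eqref{tildeR-q} yields $(Mx^*)_q\ge b_q$ for each $q\ge 2$, which is \eqref{satu-cond-k}.

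Next, for the extinction claim $x_q\in L^2[0,\infty)$ and $x_q(t)\to 0$ for $q=2,\dots,n$, I would apply Theorem \ref{thm4.2} to each such $q$ with $p=1$ and the natural choice $\al_1=a_{q1}/(\la_1+a_{11})\ge 0$. The three items of \eqref{k-q-cond} specialize as follows: item (1) unwinds to $\al_1 b_1-b_q=-R_q^{2,q}/(\la_1+a_{11})>0$; item (2) at $j=1$ holds with equality by construction; item (3) at $j=q$ reduces to $a_{q1}a_{1q}\le(\la_1+a_{11})(\la_q+a_{qq})$, which is equivalent to non-negativity of the $\{1,q\}$-principal minor of $M$ and holds strictly by the P-matrix hypothesis. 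Theorem \ref{thm4.2} then yields exponential decay $x_q(t)=O(e^{-\eta_q t})$, which implies both $x_q\in L^2$ and $x_q(t)\to 0$. Finally, under the additional hypotheses $b_1>0$ and $\mu_1>|a_{11}|$, the scalar $\hat{M}_0^{(1)}=[\mu_1-|a_{11}|]$ is a non-singular M-matrix, so all three hypotheses of Theorem \ref{thm4.1} with $p=1$ are satisfied (saturated equilibrium by step~1, $L^2$-decay by step~2, M-matrix by the additional hypothesis), and it delivers the global attractivity of $E^{*,1}$.

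The most delicate step is the verification of item (3) in \eqref{k-q-cond} at the remaining indices $j\ne 1,q$: the inequality $\al_1 a_{1j}\le a_{qj}$ amounts to $(\la_1+a_{11})a_{qj}-a_{q1}a_{1j}\ge 0$, which is not a principal minor of $M$ and is not directly supplied by the hypothesis. To close this gap, I would order the extinctions inductively, handling first those $q$ for which the natural $\al_1$ already satisfies \eqref{k-q-cond} in full, and then absorbing the residual perturbations $(\al_1 a_{1j}-a_{qj})x_j(t)$ into a vanishing term once the corresponding $x_j(t)\to 0$ is established. An alternative is to construct a single combined Lyapunov functional summing the individual $V_{1,q}$ and to exploit boundedness of all solutions (from Theorem \ref{thm2.1}(i), via $M_0^-$) to dominate the cross terms.
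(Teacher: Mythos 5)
Your proposal follows the same route as the paper's proof: saturation of $E^{*,1}$ via Lemma \ref{lem2.3}, extinction of each $x_q$ via Theorem \ref{thm4.2} with an $\al$-vector supported on the first coordinate and $\al_1=a_{q1}/(\la_1+a_{11})$, and the final attractivity via Theorem \ref{thm4.1} with $p=1$ and $\hat M_0^{(1)}=[\mu_1-|a_{11}|]$. The verifications you give for item (1), for item (2) at $j=1$ (equality by construction), and for item (3) at $j=q$ (non-negativity of the $\{1,q\}$ principal minor of $M$) are exactly those in the paper. The only structural difference is that the paper takes $p=n-1,n-2,\dots$ and $q=n,n-1,\dots$ in Theorem \ref{thm4.2} rather than $p=1$ for each $q$; with the sparse $\al$-vector the resulting inequalities are the same, so this is cosmetic.

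The ``delicate step'' you single out is a real one, and you should know that the paper's own proof does not dispose of it either. At its first step ($p=n-1$, $q=n$) the paper asserts that conditions \eqref{k-q-cond} ``are equivalent to'' the three inequalities at $j=1$ and $j=n$, silently dropping the requirements $\al_{n1}a_{1j}\le a_{nj}$ for $j=2,\dots,n-1$. The subsequent recursion does exactly what you propose: it absorbs, into a vanishing perturbation $h(t)$ of the differential inequality for $V$, the terms coming from variables already proved to decay exponentially. But that device only ever covers the indices $j>q$, never $2\le j<q$, and at the very first step none of $x_2,\dots,x_{n-1}$ is yet known to vanish --- so the base case of your induction may simply fail to exist (for $n\ge 3$ one can have a circular dependence where every candidate $q$ has some uncontrolled index). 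The inequalities $(\la_1+a_{11})a_{qj}\ge a_{q1}a_{1j}$ for $j\ne 1,q$ do not follow from ``$M$ is a P-matrix, $M_0^-$ a non-singular M-matrix, $a_{q1}\ge0$, $R_q^{2,q}<0$'': nothing in these hypotheses links $a_{1j}$ to $a_{qj}$ for $j\notin\{1,q\}$. Hence for $n=2$ (Corollary \ref{cor4.1}) both your argument and the paper's are complete, while for $n\ge3$ the missing inequalities would have to be added as hypotheses or a genuinely different argument found; your alternative of summing the functionals $V_{1,q}$ does not obviously produce the needed cancellation. A minor further point: the claim that $E^{*,1}$ with $x_1^*=b_1/(\la_1+a_{11})$ is a saturated equilibrium tacitly needs $b_1>0$, which the theorem only assumes ``in addition'' at the end; your remark that $R_1^1=b_1$ ``has the correct sign'' glosses over this in the same way the paper does.
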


\begin{proof} Since $M_0^-$ is a non-singular M-matrix, all solutions of the initial value problems \eqref{eq}-\eqref{initial} are bounded.  From Lemma \ref{lem2.3}, conditions \eqref{n,p=1} imply  that $E^{*,1}$ as above is the saturated equilibrium. 

In a first step, take $p=n-1$ and $q=n$  in Theorem \ref{thm4.2}. With $\al^{(n)}=(\al_{n1},0,\dots,0)\in\R^{n-1}$ and $\al_{n1}\ge 0$, conditions \eqref{k-q-cond} are equivalent to
\begin{equation*}
\left\{
\begin{split}
    &\al_{n1} b_1>b_n\\
    &\al_{n1} (\la_1+a_{11})\le a_{n1}\\
     &\al_{n1} a_{1n}\le \la_n+a_{nn}.
    \end{split}
    \right .
 \end{equation*}
Choose $\al_{n1}=\frac{a_{n1}}{\la_1+a_{11}}\ge 0$. From \eqref{n,p=1}, the first two conditions  are satisfied. On the other hand, 
$$\al_{n1} a_{1n}-(\la_n+a_{nn})=\frac{a_{n1}a_{1n}-(\la_1+a_{11})(\la_n+a_{nn})}{\la_1+a_{11}}=-
\frac{1}{\la_1+a_{11}}
\left |
\begin{array}{cc}
\la_1+a_{11}& a_{1n} \\
a_{n1}& \la_q+a_{nn}
\end{array}\right |<0
$$
because $M$ is a P-matrix. We now use a Lyapunov functional $V_{n-1,n}(t)$ as in  \eqref{V}, see the proof of Theorem \ref{thm4.2}. For solutions $X(t)=(x_1(t),u_1(t),\dots ,x_n(t),u_n(t))$ of the IVPs \eqref{eq}-\eqref{initial}, we deduce that  ${\dot{V}_{n-1,n}(t)} \leq -\eta_n V_{n-1,n}(t)$,  for some $\eta_n>0$, from which it follows that $x_n(t)=O(e^{-\eta_n t})$  as $t\to\infty$.
 
In a next step, we show that
 system \eqref{eq} can be reduced to  \eqref{reduced-eq} with $p=n-1$. 
 Take  $p=n-2$ and $q=n-1$  in Theorem \ref{thm4.2}, and choose $\al^{(n-1)}=(\al_{n-1,1},0,\dots,0)\in\R^{n-2}$
 with $\al_{n-1,1}=\frac{a_{n-1,1}}{\la_1+a_{11}}\ge 0$. As before, we obtain
 \begin{equation*}
\left\{
\begin{split}
    &\al_{n-1,1} b_1>b_{n-1}\\
    &\al_{n-1,1} (\la_1+a_{11})= a_{n-1,1}\\
     &\al_{n-1,1} a_{1,n-1}< \la_{n-1}+a_{n-1,n-1}.
    \end{split}
    \right .
 \end{equation*}
 Proceeding as  in the the proof of Theorem \ref{thm4.2}, for $V_{n-1,n-2}(t)$ defined by \eqref{V} and calculating the derivative along solutions $X(t)$ of  \eqref{eq}-\eqref{initial},
 from \eqref{DV} we now obtain an estimate of the form
  $${\dot{V}_{n-1,n-2}(t)} \leq \big (-\eta+h_n(t)\big ) V_{n-1,n-2}(t),$$
 where $\eta=\al_{n-1,1} b_1-b_{n-1}>0$ and $h_n(t)=(\al_{n-1,1}-a_{n-1,n})x_n(t)=O(e^{-\eta_n t})$  as $t\to\infty$. Arguing as in \eqref{upperbddV}, \eqref{lowerbddV}, we therefore conclude that $x_{n-1}(t)=O(e^{-\eta_{n-1} t})$ as $t\to\infty$, for some $\eta_{n-1}>0$. 
 
 Recursively,  in this way system \eqref{eq} is reduced to 
\begin{equation}\label{reduced,n=1}
\left\{
\begin{array}{ll}
\displaystyle{x_1'(t)=x_1(t)\biggl(b_1-\mu_1 x_1(t)-a_{11}\int_0^{\infty}K_{11}(s)x_1(t-s)\, ds-c_1 \int_0^\infty G_1(s)u_1(t-s)\, ds\biggr)} \\
\displaystyle{u_1'(t)=-e_l u_1(t)+d_1 x_1(t)}
\end{array}
\right. 
\end{equation}
By virtue of Theorem \ref{thm4.1}, the equilibrium $E^{*,1}$  is a global attractor for \eqref{eq} if $\mu_1>|a_{11}|$.
\end{proof}

For $n=2$, we obtain the following corollary.

\begin{cor}\label{cor4.1} 
Consider \eqref{eq} with $n=2$,
and suppose that $M$ is a P-matrix, that is, conditions \eqref{M0-matrix-2} are satisfied. If 
\begin{equation}\label{hatM-matrix-2}
\mu_i-a_{ii}^->0\ (i=1,2), \q a_{21}\ge 0\quad \mbox{and} \quad a_{21} b_1>(\la_1+a_{11})b_2, 
\end{equation}
then any solution of \eqref{eq} with initial conditions \eqref{initial} satisfies $ \lim_{t \to \infty}x_2(t)=0.$
Then, \eqref{eq} is reduced to \eqref{reduced,n=1}, 
whose equilibrium $(x_1^{*,1},u_1^{*,1})$ given by \eqref{E1,n=2} is positive and globally  stable if 
\begin{equation}\label{x-1}
b_1>0 \quad \mbox{and} \quad \mu_1>|a_{11}|. 
\end{equation}
In this case, $(x_1^{*,1},u_1^{*,1},0,0)$ is a global attractor for \eqref{eq}.
\end{cor}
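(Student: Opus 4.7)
The plan is to deduce Corollary \ref{cor4.1} directly from Theorem \ref{thm4.4} by verifying, in the special case $n=2$, all the hypotheses of that theorem. The condition \eqref{M0-matrix-2} is exactly the statement that $M$ is a P-matrix, so the only non-trivial check is that $M_0^-$ is a non-singular M-matrix and that \eqref{n,p=1} holds with $q=2$.

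First I would exploit the fact that $a_{21}\ge 0$ forces $a_{21}^-=0$, so that
$$M_0^-=\begin{pmatrix} \mu_1-a_{11}^- & -a_{12}^- \\ 0 & \mu_2-a_{22}^- \end{pmatrix}$$
is upper triangular with diagonal entries $\mu_i-a_{ii}^->0$. Its principal minors are the diagonal entries and their product, all strictly positive, so $M_0^-$ is a non-singular M-matrix. Next, the hypothesis $a_{21}b_1>(\lambda_1+a_{11})b_2$ is simply $R_2^{2,2}:=(\lambda_1+a_{11})b_2-a_{21}b_1<0$, which, combined with $a_{21}\ge 0$, gives \eqref{n,p=1} in the case $q=2$. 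With all the hypotheses of Theorem \ref{thm4.4} met, the theorem  yields that $E^{*,1}=(x_1^*,u_1^*,0,0)$ given by \eqref{E1,n=2} is the saturated equilibrium of \eqref{eq} and that any positive solution satisfies $x_2(t)\to 0$ as $t\to\infty$, with the decay in fact exponential (so $x_2\in L^2[0,\infty)$). The reduction of \eqref{eq} to \eqref{reduced,n=1} is then automatic, since the $x_2$-terms feeding into the $x_1,u_1$ equations vanish in the limit.

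For the second assertion, under the additional assumptions $b_1>0$ and $\mu_1>|a_{11}|$, the component $x_1^*=b_1/(\lambda_1+a_{11})$ is positive, and the $1\times 1$ matrix $\hat M_0^{(1)}=[\mu_1-|a_{11}|]$ is a non-singular M-matrix. Hence the last clause of Theorem \ref{thm4.4} (or equivalently Theorem \ref{thm4.1} applied to the one-species reduced system \eqref{reduced,n=1}) applies, and $E^{*,1}$ is globally attractive for \eqref{eq}. I do not expect a real obstacle here; the proof is essentially bookkeeping that specializes Theorem \ref{thm4.4} to $n=2$, with the only small observation being that $a_{21}\ge 0$ triangularizes $M_0^-$ and lets the bare sign conditions $\mu_i-a_{ii}^->0$ suffice for the M-matrix property.
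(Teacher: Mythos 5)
Your proposal is correct and follows essentially the same route as the paper: both reduce the corollary to Theorem \ref{thm4.4} by observing that $a_{21}\ge 0$ kills the $(2,1)$ entry of $M_0^-$, so the sign conditions $\mu_i-a_{ii}^->0$ already make $M_0^-$ a non-singular M-matrix, and that the remaining hypotheses of \eqref{n,p=1} and the final clause are direct translations of \eqref{hatM-matrix-2} and \eqref{x-1}. The paper's own proof is just a terser version of this same bookkeeping.
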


\begin{proof} Assume  \eqref{hatM-matrix-2}. With the notation in Example \ref{exmp2.1}, $E^{*,1}:=(x_1^{*,1},u_1^{*,1},0,0)$ is the saturated equilibrium. With  \eqref{hatM-matrix-2},
 $\det M_0^-=(\mu_i-a_{11}^-)(\mu_2-a_{22}^-)>0$, thus in particular $M_0^-$ is a non-singular M-matrix, hence all positive solutions of \eqref{eq} are bounded.    With the additional hypotheses  \eqref{x-1}, Theorem \ref{thm4.4} yields that  $E^{*,1}$ attracts all  positive solutions of \eqref{eq}.
\end{proof} 

\begin{rmk}\label{rmk4.3} {\rm Let $n=2$, and $M$ be a P-matrix.  If $a_{21}\ge 0$ and $(Mx^*)_2>b_2$, i.e., with the strict inequality $a_{21} b_1>(\la_1+a_{11})b_2$, Corollary \ref{cor4.1} 
provides a better result than Corollary \ref{cor3.1}. It also improves the result of Li {\it et al.} \cite{Li:2013}, where the sufficient conditions for attractivity depend on the controls.}
\end{rmk}

\begin{rmk}\label{rmk4.4} {\rm Similarly to what was done in Theorem \ref{thm4.4} for $p=1$, following a recursive scheme, one could provide sufficient conditions to have all the last $2(n-p)$ components $x_q(t),u_q(t)$ of solutions, with $p= 2,3,\dots,n-1$, satisfying $x_q(t)\to 0$ as $t\to\infty$.}
\end{rmk}

\section{Examples}
\setcounter{equation}{0}

For simplicity, we present some examples with $n=2$. Simpler versions of Examples \ref{exmp5.1} and \ref{exmp5.3} were given in \cite{Faria:2015}, but here they are  revisited in light of the better criteria in this paper.

\begin{exmp}\label{exmp5.1} {\rm Consider a planar version  of \eqref{eqUn} with e.g. discrete delays $\tau_{ij}\ge 0$:
\begin{equation}\label{5.2}
\left\{
\begin{split}
\displaystyle x_1^{\prime}(t)&=x_1(t)\Bigl(b_1-\mu_1x_1(t)-a_{11} x_1(t-\tau_{11})-  a_{12} x_2(t-\tau_{12})\Bigr)\\
\displaystyle x_2^{\prime}(t)&=x_2(t)\Bigl(b_2-\mu_2x_2(t)-a_{21} x_1(t-\tau_{21})-a_{22} x_2(t-\tau_{22})
\Bigr) \\
\end{split}
\right.
\end{equation}
Choosing $b_1=1,b_2=-{5\over 4}, \mu_1=\mu_2=1, a_{11}=a_{22}={1\over 2},a_{12}= {1\over 8}, a_{21}=-2$, 
we obtain a predator-prey system,
with community matrix $M_0= \left [\begin{array}{cc}3/2& 1/8\\ -2&3/2\end{array}\right ]$.
For this system,  $(x_1^*,x_2^*)=({{53}\over {80}}, {1\over {20}})$ is the positive equilibrium.
 Moreover, since $\det M_0>0$ and 
$\hat M_0= \left [\begin{array}{cc} 1/2& -1/8\\ -2&1/2\end{array}\right ]$
is an { M-matrix}, from \cite{Faria:2010} it follows that  $(x_1^*,x_2^*)$ is globally atractive.
We now introduce  controls, with the purpose of driving the predators to extinction.

For nonnegative coefficients $c_i$ and positive $d_i,e_i$, consider the corresponding system obtained  by adding delayed    terms with controls $-c_ix_i(t)u_i(t-\sigma_i)\, (\sigma_i\ge 0)$ to each equation $i$ in \eqref{5.2}, as well as the equations for the control variables $u_i'(t)=-e_iu_i(t)+d_ix_i(t),\, i=1,2$, as in \eqref{eq}. Note that
  $ b_2(\mu_1+a_{11}+c_1{{d_1}\over {e_1}})\le a_{21}b_1$ if and only if $c_1{{d_1}\over {e_1}}\ge {1\over {10}}$ (and any $c_2\ge 0$), in which case
 $ E^{*,1}:= (x_1^*,u_1^*,x_2^*,u_2^*)=\Big({1\over {{3\over 2}+c_1{{d_1}\over {e_1}}}},  {{d_1e_1}\over {{3\over 2}e_1+c_1d_1}},0,0\Big )$ is the saturated equilibrium; moreover, from Corollary \ref{cor3.1}, $E^{*,1}$  is a global attractor of all positive solutions without any further restrictions. For this particular situation, the global attractivity of  $ E^{*,1}$ was derived in \cite{Faria:2015} under the more restrictive  condition
  ${1\over {10}}\le c_1{{d_1}\over {e_1}}\le{1\over 2}.$
  Of course, similarly to the above situation,  finite distributed or infinite delays may have been considered.}
  \end{exmp}

  \begin{exmp}\label{exmp5.2} {\rm Consider a planar competitive system without controls given by \eqref{5.2}, with $b_1=2, \mu_1=3-a, b_2=\mu_2=1, a_{11}=a, a_{12}=4, a_{21}=a_{22}=2\ (a<3)$. Hence,  $M_0= \left [\begin{array}{cc} 3 & 4\\ 2&3\end{array}\right ]$ is the community matrix and $x^*=(2/3,0)$ is the saturated equilibrium. Since $a_{21}b_1>(\mu_1+a_{11})b_2$,   Corollary \ref{cor4.1} implies that all positive solutions $x(t)=(x_1(t),x_2(t))$ satisfy $x_2(t)\to 0$ as $t\to \infty$. One easily verifies that the characteristic equation for the linearized equation about $x^*$ is given by
  $$h(\la)=0,\q {\rm for}\q h(\la)=(\la+\frac{1}{3})\Big(\la +\frac{2}{3}(3-a)+\frac{2}{3}ae^{-\la \tau_{11}}\Big).$$
  Therefore, if $a\le 3/2$,  $x^*$ is locally asymptotically stable (see e.g.~\cite{HaleLunel}); moreover, if $0<a<3/2$, we have $\mu_1>|a_{11}|$ and  Corollary \ref{cor4.1}  yields now that $x^*$ is globally attractive.
  
  We now introduce a control variable $u_1(t)$, so that a system of the form \eqref{eq} with $n=2$ and  $ c_1,d_1,e_1>0, c_2=0$ is obtained, with the purpose of keeping the $x_2(t)$ population extinct with time, but trying to stabilize the $x_1(t)$ population at a level lower than $x_1^*=2/3$.
  If $c_1d_1<e_1$, then conditions \eqref{hatM-matrix-2} still hold, thus the  saturated equilibrium is $ E^{*,1}:=(x_1^*,u_1^*,0,0)$ with $x_1^*=2/(3+c_1\frac{d_1}{e_1})$ and $x_2(t)\to 0$ as $t\to\infty$ for any positive solution $(x_1(t),u_1(t),x_2(t),u_2(t))$ of the controlled system. If we still suppose that  $0<a<3/2$, 
  then $ E^{*,1}$ is a global attractor.}
    \end{exmp}
    
    \begin{exmp}\label{exmp5.3} {\rm Consider the uncontrolled system \eqref{eqUn}  with $n=2$ and take e.g.~ $b_1=1,b_2={1\over 3}, \mu_1=\mu_2=1, a_{11}= a_{21}={1\over 2}$, arbitrary  coefficients $a_{12}, a_{22}\in \R$ and $K_{ij}(s)=\ga e^{-\ga s}$ for some $\ga >0$.
 With the above notations, we have
$$M_0= \left [\begin{array}{cc}3/2& a_{12}\\ 1/2&1+a_{22}\end{array}\right ],
\ \hat M_0= \left [\begin{array}{cc} 1/2& -|a_{12}|\\ -1/2&1-|a_{22}|\end{array}\right ],
\  M_0^-= \left [\begin{array}{cc} 1& -a_{12}^-\\ 0&1-a_{22}^-\end{array}\right ].$$ 
One easily sees that $(X_1,0)=({2\over 3},0)$ is a  saturated equilibrium.
If $a_{22}>-1$, conditions \eqref{hatM-matrix-2} and \eqref{x-1} are satisfied with $\la_1=0$. Applying Corollary \ref{cor4.1} to this system without controls, we deduce that $({2\over 3},0)$ is a global attractor of all its positive solutions.

We now introduce the controls, in order to recover the $x_2(t)$ population, which otherwise would be lead to extinction. For positive coefficients $c_i,d_i,e_i$, denote $\al_i:=c_i{{d_i}\over {e_i}},\, i=1,2,$ consider the corresponding system with controls as in \eqref{eq}, 
and the controlled matrix $M$  given by
$M= \left [\begin{array}{cc} 3/2+\al_1\ & a_{12}\\ 1/2&1+\al_2+a_{22}\end{array}\right ].$
For any values of $\al_1,\al_2$ such that $\det M>0$,  the controlled system now has a positive equilibrium $E^*=(x_1^*,u_1^*,x_2^*,u_2^*)$, with 
$$x_1^*=(\det M)^{-1}[1+\al_1+a_{22}-\frac{1}{3}a_{12}]\q {\rm and}\q x_2^*=(\det M)^{-1}\frac{\al_1}{3}.$$
If 
$1-|a_{22}|>|a_{12}|,$
then $\hat M_0$ is a non-singular M-matrix, and Theorem \ref{thm3.2} yields that $E^*$ is a global attractor of all positive solutions. For instance, for the case of $a_{22}={1\over 2}$ and  $|a_{12}|<{1\over 2}$,  $E^*$ is always globally attractive for the system with controls. This example generalizes and improves the situation considered in \cite[Example 5.1]{Faria:2015}, where the coefficients  $a_{12},a_{22}$ were chosen to be $a_{22}={1\over 2}$ and  $a_{12}={1\over 8}$, and the global attractivity of $E^*$ was derived only if $\al_i\le 1/4, i=1,2$.}
\end{exmp}

\section*{Acknowledgements}   Research  supported by Scientific Research (c), No. 15K05010 of Japan Society for the Promotion of Science (Yoshiaki Muroya) and by
 Funda\c c\~ao para a Ci\^encia e a Tecnologia (Portugal) under project UID/MAT/\-04561/2013 (Teresa Faria).
 We thank the referees for their careful reading and valuable comments, which have led  to  improvements in the writing of some proofs.




\begin{thebibliography}{99}
%


\bibitem{AL98} S. Ahmad, A.C. Lazer, Necessary and sufficient average growth in a Lotka-Volterra system,
 {\it Nonlinear Anal.}  34 (1998), 191--228.


\bibitem{AL05} S. Ahmad, A.C. Lazer, Average growth and extinction in a competitive Lotka-Volterra system, {\it Nonlinear Anal.}  62 (2005), 545--557.

\bibitem{Yuan:17}   I. Al-Darabsah, X. Tang and Y. Yuan, A prey-predator model with migrations and delays,  {\it Discrete Contin. Dyn. Syst. Ser. B} 21 (2016), 737--761.

\bibitem{BG:97} H. Bereketoglu and I.  Gy\"{o}ri,  Global asymptotic stability in a nonautonomous Lotka-Volterra
type systems with infinite delay,  {\it J. Math. Anal. Appl.} 210 (1997), 279--291.


\bibitem{Berman:1979} 
A. Berman and R. Plemmons, {\it Nonnegative Matrices in the Mathematical Sciences}, Academic Press, New York, 1979.





\bibitem{Chen:2007} 
F. Chen, Z. Li and Y. Huang, Note on the permanence of a competitive system with infinite delay and feedback controls, {\it Nonlinear Anal. RWA} 8 (2007), 680--687. 



\bibitem{Chen:2015} F. Chen, X. Xie and H. Wang,
Global stability in a competition model of plankton
allelopathy with infinite delay, {\it J. Syst. Sci. Complex.}  28 (2015), 1070--1079.



%
 


\bibitem{Faria:2010} 
T. Faria, Stability and extinction for Lotka-Volterra systems with infinite delay, {\it J. Dynam. Differential Equations} 22 (2010), 299--324.

\bibitem{Faria:2014}
T. Faria, Global dynamics for Lotka-Volterra systems with infinite delay and patch structure,
{\it Appl. Math. Comput.}  245 (2014), 575--590.

\bibitem{Faria:2016}
T. Faria,  Persistence and permanence for a class of functional differential equations with infinite delay, {\it J. Dynam. Differential Equations} 28 (2016), 1163--1186. 

\bibitem{Faria:2015}
T. Faria and Y. Muroya, Global attractivity and extinction for Lotka-Volterra systems with infinite delay and feedback controls,  
{\it  Proc. Roy. Soc. Edinburgh Sect. A} 145 (2015), 301--330. 

 


\bibitem{Gopal}  
K. Gopalsamy, {\it Stability and oscillations in delay differential equations of population dynamics},  Kluwer Academic Publ., Dordrecht, 1992.

\bibitem{Gopalsamy:2003}  
K. Gopalsamy and P. Weng, Global attractivity in a competition system with feedback controls, {\it Comput. Math. Appl.}  45  (2003), 665--676.

\bibitem{HaleKato}
J.K. Hale and J.  Kato,  Phase space for retarded equations with infinite delay, {\it Funkcial.
Ekvac.} 21  (1978), 11--41.

\bibitem{HaleLunel}
J. K. Hale,   S. M. Verduyn Lunel, {\it Introduction to Functional Differential Equations}, Springer-Verlag, New-York, 1993.

\bibitem{HMN} Y. Hino, S. Murakami and T. Naito. {\it Functional Differential Equations with Infinite Delay},
Springer-Verlag, New-York, 1993.

\bibitem{Hofbauer:1988}  
J. Hofbauer and  K. Sigmund, {\it The Theory of Evolution and Dynamical Systems}, London Mathematical Society, Cambridge University Press, Cambridge, 1988. 

\bibitem{Hou} Z. Hou, Permanence and extinction in competitive Lotka-Volterra systems with delays,
{\it Nonlinear Anal. RWA}  12 (2011), 2130--2141.

\bibitem{HTG} H. Hu, Z. Teng and  S. Gao, Extinction in nonautonomous Lotka-Volterra competitive system with
pure-delays and feedback controls,  {\it Nonlinear Anal. RWA} 10 (2009), 2508--2520.



\bibitem{Kuang:95} Y. Kuang, Global stability in delay differential systems without dominating instantaneous negative feedbacks. {\it J. Differential Equations}  119 (1995), 503--532.



\bibitem{KuangS:93} Y. Kuang and  H.L. Smith,  Global stability for infinite delay Lotka-Volterra type systems,
 {\it J. Differential Equations} {103} (1993), 221--246. 
%

\bibitem{Li:2013}
Z. Li, M. Han and F. Chen, Influence of feedback controls on an autonomous Lotka-Volterra competitive system with infinite delays, {\it Nonlinear Anal. RWA} 14 (2013), 402-413.

\bibitem{MOca:12} F. Montes de Oca and L. P\'erez, Extinction in nonautonomous competitive Lotka-Volterra systems with infinite delay, {\it Nonlinear Anal.}  75 (2012), 758--768. 
 	
	

\bibitem{MOca:15} F. Montes de Oca and  L. P\'erez,  Balancing survival and extinction in nonautonomous competitive Lotka-Volterra systems with infinite delays, {\it Discrete Contin. Dyn. Syst. Ser. B} 20 (2015), 2663--2690.   

\bibitem{MOca:06}   F. Montes de Oca and   M. Vivas. Extinction in a two dimensional Lotka-Volterra system with infinite delay,  {\it Nonlinear Anal. RWA} 7  (2006), 1042--1047.
 
\bibitem{Muroya:2003}   Y. Muroya, Partial survival and extinction of species in nonutonomous Lotka-Volterra systems with delays, {\it  Dynam. Systems Appl.}   12 (2003), 295--306.

\bibitem{Muroya:2014} Y. Muroya,
Global stability of a delayed nonlinear Lotka-Volterra system with feedback controls and patch structure,
{\it Appl. Math. Comput.} 239 (2014), 60--73.

\bibitem{Muroya:2015} Y. Muroya, A Lotka-Volterra system with patch structure (related to a multi-group SI epidemic model), {\it Discrete Contin. Dyn. Syst. Ser. S}  8 (2015), 999--1008.

\bibitem{MTW:2015} Y. Muroya, T. Kuniya and J. Wang,  Stability analysis of a delayed multi-group SIS epidemic model with nonlinear incidence rates and patch structure, {\it J. Math. Anal. Appl.} 425 (2015), 415--439.


\bibitem{Nie:2009}  
L. Nie, J. Peng and  Z. Teng, Permanence and stability in multi-species non-autonomous Lotka-Volterra competitive systems with delays and feedback controls, {\it Math. Comput. Modelling}  49 (2009), 295--306.

\bibitem{Shi:2012}
C. Shi, Z. Li and F. Chen, Extinction in a nonautonomous Lotka-Volterra competitive system with infinite delay and feedback controls, {\it Nonlinear Anal. RWA}  13 (2012), 2214--2226.

\bibitem{SY:16} T. Su and  X. Yang, Finite-time synchronization of competitive neural networks with mixed delays, {\it Discrete Contin. Dyn. Syst. Ser. B} 21 (2016), 3655--3667.

\bibitem{TangZou:2003} X.H. Tang, X. Zou
Global attractivity of non-autonomous Lotka-Volterra competition system without instantaneous negative feedback, {\it J. Differential Equations} 192 (2003), 502--535.


 \bibitem{TC:2001} Z. Teng and L. Chen, Global stability of periodic Lotka-Volterra systems with delays, {\it Nonlinear Anal.} 45 (2001), 1081--1095.
   
\bibitem{TR:2006} Z. Teng and  M. Rehim,  Persistence in nonautonomous predator-prey systems with infinite delay {\it J. Comput. Appl. Math.} 197 (2006), 302--321.

\bibitem{VC:89} R.R. Vance and E.A. Coddington, A nonautonomous model of population growth, {\it J. Math. Biol.} 27 (1989), 491-506.

\bibitem{WTJ:08} K. Wang, Z. Teng and H. Jiang, On the permanence for $n$-species non-autonomous Lotka-Volterra competitive system with infinite delay and feedback controls, {\it Int. J. Biomath.} 1 (2008), 29--43.

%


\bibitem{Zhang:2008} 
Z. Zhang, Existence and global attractivity of a positive periodic solution for a generalized delayed population model with stocking and feedback control, {\it Math. Comput. Modelling} 48 (2008), 749--760.

\end{thebibliography}
\end{document}